\numberwithin{equation}{section}
\newenvironment{proof}{\vspace{1ex}\noindent{\bf Proof:} }{
\mbox{}\hspace{\fill}\rule{1ex}{1ex}\vspace{1.5ex}}
\let\pdfoutput=\undefined\fi
\chardef\@x10\chardef\@xv60
\def\tcitime{
\def\@time{%
  \@minute\time\@hour\@minute\divide\@hour\@xv
  \ifnum\@hour<\@x 0\fi\the\@hour:%
  \multiply\@hour\@xv\advance\@minute-\@hour
  \ifnum\@minute<\@x 0\fi\the\@minute
  }}%
\def\x@hyperref#1#2#3{%
   % Turn off various catcodes before reading parameter 4
   \catcode`\~ = 12
   \catcode`\$ = 12
   \catcode`\_ = 12
   \catcode`\# = 12
   \catcode`\& = 12
   \y@hyperref{#1}{#2}{#3}%
}
\def\y@hyperref#1#2#3#4{%
   #2\ref{#4}#3
   \catcode`\~ = 13
   \catcode`\$ = 3
   \catcode`\_ = 8
   \catcode`\# = 6
   \catcode`\& = 4
}
\def\QCTOpt[#1]#2{%
  \def\QCTOptB{#1}
  \def\QCTOptA{#2}
}
\def\QCTNOpt#1{%
  \def\QCTOptA{#1}
  \let\QCTOptB\empty
}
\def\Qct{%
  \@ifnextchar[{%
    \QCTOpt}{\QCTNOpt}
}
\def\QCBOpt[#1]#2{%
  \def\QCBOptB{#1}%
  \def\QCBOptA{#2}%
}
\def\QCBNOpt#1{%
  \def\QCBOptA{#1}%
  \let\QCBOptB\empty
}
\def\Qcb{%
  \@ifnextchar[{%
    \QCBOpt}{\QCBNOpt}%
}
\def\PrepCapArgs{%
  \ifx\QCBOptA\empty
    \ifx\QCTOptA\empty
      {}%
    \else
      \ifx\QCTOptB\empty
        {\QCTOptA}%
      \else
        [\QCTOptB]{\QCTOptA}%
      \fi
    \fi
  \else
    \ifx\QCBOptA\empty
      {}%
    \else
      \ifx\QCBOptB\empty
        {\QCBOptA}%
      \else
        [\QCBOptB]{\QCBOptA}%
      \fi
    \fi
  \fi
}
\def\GRAPHICSPS#1{%
 \ifcase\GRAPHICSTYPE%\GRAPHICSTYPE=0
   \special{ps: #1}%
 \or%\GRAPHICSTYPE=1
   \special{language "PS", include "#1"}%
%%%\or%\GRAPHICSTYPE=2
%%%  #1%
 \fi
}%
\def\graffile#1#2#3#4{%
%%% \ifnum\GRAPHICSTYPE=\tw@
%%%  %Following if using psfig
%%%  \@ifundefined{psfig}{\input psfig.tex}{}%
%%%  \psfig{file=#1, height=#3, width=#2}%
%%% \else
  %Following for all others
  % JCS - added BOXTHEFRAME, see below
    \bgroup
	   \@inlabelfalse
       \leavevmode
       \@ifundefined{bbl@deactivate}{\def~{\string~}}{\activesoff}%
        \raise -#4 \BOXTHEFRAME{%
           \hbox to #2{\raise #3\hbox to #2{\null #1\hfil}}}%
    \egroup
}%
\def\draftbox#1#2#3#4{%
 \leavevmode\raise -#4 \hbox{%
  \frame{\rlap{\protect\tiny #1}\hbox to #2%
   {\vrule height#3 width\z@ depth\z@\hfil}%
  }%
 }%
}%
\let\nographics=\@msidraft
\newif\ifwasdraft
\def\GRAPHIC#1#2#3#4#5{%
   \ifnum\@msidraft=\@ne\draftbox{#2}{#3}{#4}{#5}%
   \else\graffile{#1}{#3}{#4}{#5}%
   \fi
}
\def\addtoLaTeXparams#1{%
    \edef\LaTeXparams{\LaTeXparams #1}}%
\newif\ifBoxFrame \BoxFramefalse
\newif\ifOverFrame \OverFramefalse
\newif\ifUnderFrame \UnderFramefalse
\def\BOXTHEFRAME#1{%
   \hbox{%
      \ifBoxFrame
         \frame{#1}%
      \else
         {#1}%
      \fi
   }%
}
\def\doFRAMEparams#1{\BoxFramefalse\OverFramefalse\UnderFramefalse\readFRAMEparams#1\end}%
\def\readFRAMEparams#1{%
 \ifx#1\end%
  \let\next=\relax
  \else
  \ifx#1i\dispkind=\z@\fi
  \ifx#1d\dispkind=\@ne\fi
  \ifx#1f\dispkind=\tw@\fi
  \ifx#1t\addtoLaTeXparams{t}\fi
  \ifx#1b\addtoLaTeXparams{b}\fi
  \ifx#1p\addtoLaTeXparams{p}\fi
  \ifx#1h\addtoLaTeXparams{h}\fi
  \ifx#1X\BoxFrametrue\fi
  \ifx#1O\OverFrametrue\fi
  \ifx#1U\UnderFrametrue\fi
  \ifx#1w
    \ifnum\@msidraft=1\wasdrafttrue\else\wasdraftfalse\fi
    \@msidraft=\@ne
  \fi
  \let\next=\readFRAMEparams
  \fi
 \next
 }%
\def\IFRAME#1#2#3#4#5#6{%
      \bgroup
      \let\QCTOptA\empty
      \let\QCTOptB\empty
      \let\QCBOptA\empty
      \let\QCBOptB\empty
      #6%
      \parindent=0pt
      \leftskip=0pt
      \rightskip=0pt
      \setbox0=\hbox{\QCBOptA}%
      \@tempdima=#1\relax
      \ifOverFrame
          % Do this later
          \typeout{This is not implemented yet}%
          \show\HELP
      \else
         \ifdim\wd0>\@tempdima
            \advance\@tempdima by \@tempdima
            \ifdim\wd0 >\@tempdima
               \setbox1 =\vbox{%
                  \unskip\hbox to \@tempdima{\hfill\GRAPHIC{#5}{#4}{#1}{#2}{#3}\hfill}%
                  \unskip\hbox to \@tempdima{\parbox[b]{\@tempdima}{\QCBOptA}}%
               }%
               \wd1=\@tempdima
            \else
               \textwidth=\wd0
               \setbox1 =\vbox{%
                 \noindent\hbox to \wd0{\hfill\GRAPHIC{#5}{#4}{#1}{#2}{#3}\hfill}\\%
                 \noindent\hbox{\QCBOptA}%
               }%
               \wd1=\wd0
            \fi
         \else
            \ifdim\wd0>0pt
              \hsize=\@tempdima
              \setbox1=\vbox{%
                \unskip\GRAPHIC{#5}{#4}{#1}{#2}{0pt}%
                \break
                \unskip\hbox to \@tempdima{\hfill \QCBOptA\hfill}%
              }%
              \wd1=\@tempdima
           \else
              \hsize=\@tempdima
              \setbox1=\vbox{%
                \unskip\GRAPHIC{#5}{#4}{#1}{#2}{0pt}%
              }%
              \wd1=\@tempdima
           \fi
         \fi
         \@tempdimb=\ht1
         %\advance\@tempdimb by \dp1
         \advance\@tempdimb by -#2
         \advance\@tempdimb by #3
         \leavevmode
         \raise -\@tempdimb \hbox{\box1}%
      \fi
      \egroup%
}%
\def\DFRAME#1#2#3#4#5{%
  \vspace\topsep
  \hfil\break
  \bgroup
     \leftskip\@flushglue
	 \rightskip\@flushglue
	 \parindent\z@
	 \parfillskip\z@skip
     \let\QCTOptA\empty
     \let\QCTOptB\empty
     \let\QCBOptA\empty
     \let\QCBOptB\empty
	 \vbox\bgroup
        \ifOverFrame 
           #5\QCTOptA\par
        \fi
        \GRAPHIC{#4}{#3}{#1}{#2}{\z@}%
        \ifUnderFrame 
           \break#5\QCBOptA
        \fi
	 \egroup
  \egroup
  \vspace\topsep
  \break
}%
\def\FFRAME#1#2#3#4#5#6#7{%
 %If float.sty loaded and float option is 'h', change to 'H'  (gp) 1998/09/05
  \@ifundefined{floatstyle}
    {%floatstyle undefined (and float.sty not present), no change
     \begin{figure}[#1]%
    }
    {%floatstyle DEFINED
	 \ifx#1h%Only the h parameter, change to H
      \begin{figure}[H]%
	 \else
      \begin{figure}[#1]%
	 \fi
	}
  \let\QCTOptA\empty
  \let\QCTOptB\empty
  \let\QCBOptA\empty
  \let\QCBOptB\empty
  \ifOverFrame
    #4
    \ifx\QCTOptA\empty
    \else
      \ifx\QCTOptB\empty
        \caption{\QCTOptA}%
      \else
        \caption[\QCTOptB]{\QCTOptA}%
      \fi
    \fi
    \ifUnderFrame\else
      \label{#5}%
    \fi
  \else
    \UnderFrametrue%
  \fi
  \begin{center}\GRAPHIC{#7}{#6}{#2}{#3}{\z@}\end{center}%
  \ifUnderFrame
    #4
    \ifx\QCBOptA\empty
      \caption{}%
    \else
      \ifx\QCBOptB\empty
        \caption{\QCBOptA}%
      \else
        \caption[\QCBOptB]{\QCBOptA}%
      \fi
    \fi
    \label{#5}%
  \fi
  \end{figure}%
 }%
\def\makeactives{
  \catcode`\"=\active
  \catcode`\;=\active
  \catcode`\:=\active
  \catcode`\'=\active
  \catcode`\~=\active
}
   \gdef\activesoff{%
      \def"{\string"}%
      \def;{\string;}%
      \def:{\string:}%
      \def'{\string'}%
      \def~{\string~}%
      %\bbl@deactivate{"}%
      %\bbl@deactivate{;}%
      %\bbl@deactivate{:}%
      %\bbl@deactivate{'}%
    }
\def\FRAME#1#2#3#4#5#6#7#8{%
 \bgroup
 \ifnum\@msidraft=\@ne
   \wasdrafttrue
 \else
   \wasdraftfalse%
 \fi
 \def\LaTeXparams{}%
 \dispkind=\z@
 \def\LaTeXparams{}%
 \doFRAMEparams{#1}%
 \ifnum\dispkind=\z@\IFRAME{#2}{#3}{#4}{#7}{#8}{#5}\else
  \ifnum\dispkind=\@ne\DFRAME{#2}{#3}{#7}{#8}{#5}\else
   \ifnum\dispkind=\tw@
    \edef\@tempa{\noexpand\FFRAME{\LaTeXparams}}%
    \@tempa{#2}{#3}{#5}{#6}{#7}{#8}%
    \fi
   \fi
  \fi
  \ifwasdraft\@msidraft=1\else\@msidraft=0\fi{}%
  \egroup
 }%
\def\TEXUX#1{"texux"}
\def\func#1{\mathop{\rm #1}\nolimits}%
\long\def\QQQ#1#2{%
     \long\expandafter\def\csname#1\endcsname{#2}}%
\long\def\QQA#1#2{}%
\def\QTR#1#2{{\csname#1\endcsname {#2}}}%
\def\EXPAND#1[#2]#3{}%
\def\NOEXPAND#1[#2]#3{}%
\def\LaTeXparent#1{}%
\def\ChildStyles#1{}%
\def\ChildDefaults#1{}%
\def\QTagDef#1#2#3{}%
  \providecommand{\UNICODE}[2][]{\protect\rule{.1in}{.1in}}
  \providecommand{\U}[1]{\protect\rule{.1in}{.1in}}
\def\QQfnmark#1{\footnotemark}
 \def\abstract{%
  \if@twocolumn
   \section*{Abstract (Not appropriate in this style!)}%
   \else \small 
   \begin{center}{\bf Abstract\vspace{-.5em}\vspace{\z@}}\end{center}%
   \quotation 
   \fi
  }%
   \def\registered{\relax\ifmmode{}\r@gistered
                    \else$\m@th\r@gistered$\fi}%
 \def\r@gistered{^{\ooalign
  {\hfil\raise.07ex\hbox{$\scriptstyle\rm\text{R}$}\hfil\crcr
  \mathhexbox20D}}}}{}%
\newdimen\theight
\def\newfmtname{LaTeX2e}
  \DeclareOldFontCommand{\rm}{\normalfont\rmfamily}{\mathrm}
  \DeclareOldFontCommand{\sf}{\normalfont\sffamily}{\mathsf}
  \DeclareOldFontCommand{\tt}{\normalfont\ttfamily}{\mathtt}
  \DeclareOldFontCommand{\bf}{\normalfont\bfseries}{\mathbf}
  \DeclareOldFontCommand{\it}{\normalfont\itshape}{\mathit}
  \DeclareOldFontCommand{\sl}{\normalfont\slshape}{\@nomath\sl}
  \DeclareOldFontCommand{\sc}{\normalfont\scshape}{\@nomath\sc}
\def\alpha{{\Greekmath 010B}}%
\def\beta{{\Greekmath 010C}}%
\def\gamma{{\Greekmath 010D}}%
\def\delta{{\Greekmath 010E}}%
\def\epsilon{{\Greekmath 010F}}%
\def\zeta{{\Greekmath 0110}}%
\def\eta{{\Greekmath 0111}}%
\def\theta{{\Greekmath 0112}}%
\def\iota{{\Greekmath 0113}}%
\def\kappa{{\Greekmath 0114}}%
\def\lambda{{\Greekmath 0115}}%
\def\mu{{\Greekmath 0116}}%
\def\nu{{\Greekmath 0117}}%
\def\xi{{\Greekmath 0118}}%
\def\pi{{\Greekmath 0119}}%
\def\rho{{\Greekmath 011A}}%
\def\sigma{{\Greekmath 011B}}%
\def\tau{{\Greekmath 011C}}%
\def\upsilon{{\Greekmath 011D}}%
\def\phi{{\Greekmath 011E}}%
\def\chi{{\Greekmath 011F}}%
\def\psi{{\Greekmath 0120}}%
\def\omega{{\Greekmath 0121}}%
\def\varepsilon{{\Greekmath 0122}}%
\def\vartheta{{\Greekmath 0123}}%
\def\varpi{{\Greekmath 0124}}%
\def\varrho{{\Greekmath 0125}}%
\def\varsigma{{\Greekmath 0126}}%
\def\varphi{{\Greekmath 0127}}%
\def\nabla{{\Greekmath 0272}}
\def\FindBoldGroup{%
   {\setbox0=\hbox{$\mathbf{x\global\edef\theboldgroup{\the\mathgroup}}$}}%
}
\def\Greekmath#1#2#3#4{%
    \if@compatibility
        \ifnum\mathgroup=\symbold
           \mathchoice{\mbox{\boldmath$\displaystyle\mathchar"#1#2#3#4$}}%
                      {\mbox{\boldmath$\textstyle\mathchar"#1#2#3#4$}}%
                      {\mbox{\boldmath$\scriptstyle\mathchar"#1#2#3#4$}}%
                      {\mbox{\boldmath$\scriptscriptstyle\mathchar"#1#2#3#4$}}%
        \else
           \mathchar"#1#2#3#4% 
        \fi 
    \else 
        \FindBoldGroup
        \ifnum\mathgroup=\theboldgroup % For 2e
           \mathchoice{\mbox{\boldmath$\displaystyle\mathchar"#1#2#3#4$}}%
                      {\mbox{\boldmath$\textstyle\mathchar"#1#2#3#4$}}%
                      {\mbox{\boldmath$\scriptstyle\mathchar"#1#2#3#4$}}%
                      {\mbox{\boldmath$\scriptscriptstyle\mathchar"#1#2#3#4$}}%
        \else
           \mathchar"#1#2#3#4% 
        \fi     	    
	  \fi}
\newif\ifGreekBold  \GreekBoldfalse
\let\SAVEPBF=\pbf
\def\pbf{\GreekBoldtrue\SAVEPBF}%
  \newcounter{equationnumber}  
  \def\mathletters{%
     \addtocounter{equation}{1}
     \edef\@currentlabel{\theequation}%
     \setcounter{equationnumber}{\c@equation}
     \setcounter{equation}{0}%
     \edef\theequation{\@currentlabel\noexpand\alph{equation}}%
  }
    \def\BibTeX{{\rm B\kern-.05em{\sc i\kern-.025em b}\kern-.08em
                 T\kern-.1667em\lower.7ex\hbox{E}\kern-.125emX}}}{}%
\def\AmS{{\protect\usefont{OMS}{cmsy}{m}{n}%
                A\kern-.1667em\lower.5ex\hbox{M}\kern-.125emS}}}{}%
\def\@@eqncr{\let\@tempa\relax
    \ifcase\@eqcnt \def\@tempa{& & &}\or \def\@tempa{& &}%
      \else \def\@tempa{&}\fi
     \@tempa
     \if@eqnsw
        \iftag@
           \@taggnum
        \else
           \@eqnnum\stepcounter{equation}%
        \fi
     \fi
     \global\tag@false
     \global\@eqnswtrue
     \global\@eqcnt\z@\cr}
\def\TCItag{\@ifnextchar*{\@TCItagstar}{\@TCItag}}
\def\@TCItag#1{%
    \global\tag@true
    \global\def\@taggnum{(#1)}}
\def\@TCItagstar*#1{%
    \global\tag@true
    \global\def\@taggnum{#1}}
\def\ExitTCILatex{\makeatother }
\if@compatibility\message{amsmath already loaded}\fi\aftergroup\ExitTCILatex}
\if@compatibility\message{amstex already loaded}\fi\aftergroup\ExitTCILatex}
\if@compatibility\message{amsgen already loaded}\fi\aftergroup\ExitTCILatex}
\let\DOTSI\relax
\def\RIfM@{\relax\ifmmode}%
\def\FN@{\futurelet\next}%
\def\iint{\DOTSI\intno@\tw@\FN@\ints@}%
\def\iiint{\DOTSI\intno@\thr@@\FN@\ints@}%
\def\iiiint{\DOTSI\intno@4 \FN@\ints@}%
\def\idotsint{\DOTSI\intno@\z@\FN@\ints@}%
\def\ints@{\findlimits@\ints@@}%
\newif\iflimtoken@
\newif\iflimits@
\def\findlimits@{\limtoken@true\ifx\next\limits\limits@true
 \else\ifx\next\nolimits\limits@false\else
 \limtoken@false\ifx\ilimits@\nolimits\limits@false\else
 \ifinner\limits@false\else\limits@true\fi\fi\fi\fi}%
\def\multint@{\int\ifnum\intno@=\z@\intdots@                          %1
 \else\intkern@\fi                                                    %2
 \ifnum\intno@>\tw@\int\intkern@\fi                                   %3
 \ifnum\intno@>\thr@@\int\intkern@\fi                                 %4
 \int}%                                                               %5
\def\multintlimits@{\intop\ifnum\intno@=\z@\intdots@\else\intkern@\fi
 \ifnum\intno@>\tw@\intop\intkern@\fi
 \ifnum\intno@>\thr@@\intop\intkern@\fi\intop}%
\def\intic@{%
    \mathchoice{\hskip.5em}{\hskip.4em}{\hskip.4em}{\hskip.4em}}%
\def\negintic@{\mathchoice
 {\hskip-.5em}{\hskip-.4em}{\hskip-.4em}{\hskip-.4em}}%
\def\ints@@{\iflimtoken@                                              %1
 \def\ints@@@{\iflimits@\negintic@
   \mathop{\intic@\multintlimits@}\limits                             %2
  \else\multint@\nolimits\fi                                          %3
  \eat@}%                                                             %4
 \else                                                                %5
 \def\ints@@@{\iflimits@\negintic@
  \mathop{\intic@\multintlimits@}\limits\else
  \multint@\nolimits\fi}\fi\ints@@@}%
\def\intkern@{\mathchoice{\!\!\!}{\!\!}{\!\!}{\!\!}}%
\def\plaincdots@{\mathinner{\cdotp\cdotp\cdotp}}%
\def\intdots@{\mathchoice{\plaincdots@}%
 {{\cdotp}\mkern1.5mu{\cdotp}\mkern1.5mu{\cdotp}}%
 {{\cdotp}\mkern1mu{\cdotp}\mkern1mu{\cdotp}}%
 {{\cdotp}\mkern1mu{\cdotp}\mkern1mu{\cdotp}}}%
\def\RIfM@{\relax\protect\ifmmode}
\def\text{\RIfM@\expandafter\text@\else\expandafter\mbox\fi}
\let\nfss@text\text
\def\text@#1{\mathchoice
   {\textdef@\displaystyle\f@size{#1}}%
   {\textdef@\textstyle\tf@size{\firstchoice@false #1}}%
   {\textdef@\textstyle\sf@size{\firstchoice@false #1}}%
   {\textdef@\textstyle \ssf@size{\firstchoice@false #1}}%
   \glb@settings}
\def\textdef@#1#2#3{\hbox{{%
                    \everymath{#1}%
                    \let\f@size#2\selectfont
                    #3}}}
\newif\iffirstchoice@
\def\Let@{\relax\iffalse{\fi\let\\=\cr\iffalse}\fi}%
\def\vspace@{\def\vspace##1{\crcr\noalign{\vskip##1\relax}}}%
\def\multilimits@{\bgroup\vspace@\Let@
 \baselineskip\fontdimen10 \scriptfont\tw@
 \advance\baselineskip\fontdimen12 \scriptfont\tw@
 \lineskip\thr@@\fontdimen8 \scriptfont\thr@@
 \lineskiplimit\lineskip
 \vbox\bgroup\ialign\bgroup\hfil$\m@th\scriptstyle{##}$\hfil\crcr}%
\def\Sb{_\multilimits@}%
\def\endSb{\crcr\egroup\egroup\egroup}%
\def\Sp{^\multilimits@}%
\newdimen\ex@
\def\rightarrowfill@#1{$#1\m@th\mathord-\mkern-6mu\cleaders
 \hbox{$#1\mkern-2mu\mathord-\mkern-2mu$}\hfill
 \mkern-6mu\mathord\rightarrow$}%
\def\leftarrowfill@#1{$#1\m@th\mathord\leftarrow\mkern-6mu\cleaders
 \hbox{$#1\mkern-2mu\mathord-\mkern-2mu$}\hfill\mkern-6mu\mathord-$}%
\def\leftrightarrowfill@#1{$#1\m@th\mathord\leftarrow
\mkern-6mu\cleaders
 \hbox{$#1\mkern-2mu\mathord-\mkern-2mu$}\hfill
 \mkern-6mu\mathord\rightarrow$}%
\def\overrightarrow{\mathpalette\overrightarrow@}%
\def\overrightarrow@#1#2{\vbox{\ialign{##\crcr\rightarrowfill@#1\crcr
 \noalign{\kern-\ex@\nointerlineskip}$\m@th\hfil#1#2\hfil$\crcr}}}%
\def\overleftarrow{\mathpalette\overleftarrow@}%
\def\overleftarrow@#1#2{\vbox{\ialign{##\crcr\leftarrowfill@#1\crcr
 \noalign{\kern-\ex@\nointerlineskip}$\m@th\hfil#1#2\hfil$\crcr}}}%
\def\overleftrightarrow{\mathpalette\overleftrightarrow@}%
\def\overleftrightarrow@#1#2{\vbox{\ialign{##\crcr
   \leftrightarrowfill@#1\crcr
 \noalign{\kern-\ex@\nointerlineskip}$\m@th\hfil#1#2\hfil$\crcr}}}%
\def\underrightarrow{\mathpalette\underrightarrow@}%
\def\underrightarrow@#1#2{\vtop{\ialign{##\crcr$\m@th\hfil#1#2\hfil
  $\crcr\noalign{\nointerlineskip}\rightarrowfill@#1\crcr}}}%
\def\underleftarrow{\mathpalette\underleftarrow@}%
\def\underleftarrow@#1#2{\vtop{\ialign{##\crcr$\m@th\hfil#1#2\hfil
  $\crcr\noalign{\nointerlineskip}\leftarrowfill@#1\crcr}}}%
\def\underleftrightarrow{\mathpalette\underleftrightarrow@}%
\def\underleftrightarrow@#1#2{\vtop{\ialign{##\crcr$\m@th
  \hfil#1#2\hfil$\crcr
 \noalign{\nointerlineskip}\leftrightarrowfill@#1\crcr}}}%
\def\qopnamewl@#1{\mathop{\operator@font#1}\nlimits@}
\let\nlimits@\displaylimits
\def\setboxz@h{\setbox\z@\hbox}
\def\varlim@#1#2{\mathop{\vtop{\ialign{##\crcr
 \hfil$#1\m@th\operator@font lim$\hfil\crcr
 \noalign{\nointerlineskip}#2#1\crcr
 \noalign{\nointerlineskip\kern-\ex@}\crcr}}}}
 \def\rightarrowfill@#1{\m@th\setboxz@h{$#1-$}\ht\z@\z@
  $#1\copy\z@\mkern-6mu\cleaders
  \hbox{$#1\mkern-2mu\box\z@\mkern-2mu$}\hfill
  \mkern-6mu\mathord\rightarrow$}
\def\leftarrowfill@#1{\m@th\setboxz@h{$#1-$}\ht\z@\z@
  $#1\mathord\leftarrow\mkern-6mu\cleaders
  \hbox{$#1\mkern-2mu\copy\z@\mkern-2mu$}\hfill
  \mkern-6mu\box\z@$}
\def\projlim{\qopnamewl@{proj\,lim}}
\def\injlim{\qopnamewl@{inj\,lim}}
\def\varinjlim{\mathpalette\varlim@\rightarrowfill@}
\def\varprojlim{\mathpalette\varlim@\leftarrowfill@}
\def\varliminf{\mathpalette\varliminf@{}}
\def\varliminf@#1{\mathop{\underline{\vrule\@depth.2\ex@\@width\z@
   \hbox{$#1\m@th\operator@font lim$}}}}
\def\varlimsup{\mathpalette\varlimsup@{}}
\def\varlimsup@#1{\mathop{\overline
  {\hbox{$#1\m@th\operator@font lim$}}}}
\def\align{\@verbatim \frenchspacing\@vobeyspaces \@alignverbatim
You are using the "align" environment in a style in which it is not defined.}
\let\csname endalign*\endcsname =\endtrivlist
\def\alignat{\@verbatim \frenchspacing\@vobeyspaces \@alignatverbatim
You are using the "alignat" environment in a style in which it is not defined.}
\let\csname endalignat*\endcsname =\endtrivlist
\def\xalignat{\@verbatim \frenchspacing\@vobeyspaces \@xalignatverbatim
You are using the "xalignat" environment in a style in which it is not defined.}
\let\csname endxalignat*\endcsname =\endtrivlist
\def\gather{\@verbatim \frenchspacing\@vobeyspaces \@gatherverbatim
You are using the "gather" environment in a style in which it is not defined.}
\let\csname endgather*\endcsname =\endtrivlist
\def\multiline{\@verbatim \frenchspacing\@vobeyspaces \@multilineverbatim
You are using the "multiline" environment in a style in which it is not defined.}
\let\csname endmultiline*\endcsname =\endtrivlist
\def\arrax{\@verbatim \frenchspacing\@vobeyspaces \@arraxverbatim
You are using a type of "array" construct that is only allowed in AmS-LaTeX.}
\def\tabulax{\@verbatim \frenchspacing\@vobeyspaces \@tabulaxverbatim
You are using a type of "tabular" construct that is only allowed in AmS-LaTeX.}
\let\csname endarrax*\endcsname =\endtrivlist
\let\csname endtabulax*\endcsname =\endtrivlist
 \def\endequation{%
     \ifmmode\ifinner % FLEQN hack
      \iftag@
        \addtocounter{equation}{-1} % undo the increment made in the begin part
        $\hfil
           \displaywidth\linewidth\@taggnum\egroup \endtrivlist
        \global\tag@false
        \global\@ignoretrue   
      \else
        $\hfil
           \displaywidth\linewidth\@eqnnum\egroup \endtrivlist
        \global\tag@false
        \global\@ignoretrue 
      \fi
     \else   
      \iftag@
        \addtocounter{equation}{-1} % undo the increment made in the begin part
        \eqno \hbox{\@taggnum}
        \global\tag@false%
        $$\global\@ignoretrue
      \else
        \eqno \hbox{\@eqnnum}% $$ BRACE MATCHING HACK
        $$\global\@ignoretrue
      \fi
     \fi\fi
 } 
 \newif\iftag@ \tag@false
 \def\TCItag{\@ifnextchar*{\@TCItagstar}{\@TCItag}}
 \def\@TCItag#1{%
     \global\tag@true
     \global\def\@taggnum{(#1)}}
 \def\@TCItagstar*#1{%
     \global\tag@true
     \global\def\@taggnum{#1}}
     \def\tag{\@ifnextchar*{\@tagstar}{\@tag}}
     \def\@tag#1{%
         \global\tag@true
         \global\def\@taggnum{(#1)}}
     \def\@tagstar*#1{%
         \global\tag@true
         \global\def\@taggnum{#1}}
\begin{document}

\title{Self-Similarity and Lamperti Convergence for \\
Families of Stochastic Processes}
\author{Bent J\o rgensen $\cdot $ J. Ra\'{u}l Mart\'{\i}nez $\cdot $ Clarice
G.B. Dem\'{e}trio \\
%EndAName
University of Southern Denmark\\
Universidad Nacional de C\'{o}rdoba, Argentina\\
University of S\~{a}o Paulo, Brazil}
\maketitle

\begin{abstract}
We define a new type of self-similarity for one-parameter families of
stochastic processes, which applies to a number of important families of
processes that are not self-similar in the conventional sense. This includes
a new class of fractional Hougaard motions defined as moving averages of
Hougaard L\'{e}vy process, as well as some well-known families of Hougaard L%
\'{e}vy processes such as the Poisson processes, Brownian motions with
drift, and the inverse Gaussian processes. Such families have many
properties in common with ordinary self-similar processes, including the
form of their covariance functions, and the fact that they appear as limits
in a Lamperti-type limit theorem for families of stochastic processes.
\medskip \newline
\emph{Key words:} Exponential tilting; fractional Hougaard motion; Hougaard L%
\'{e}vy process; Lamperti transformation; power variance function. \medskip 
\newline
\emph{Mathematics Subject Classification (2010):} Primary 60G18, 60G22;
Secondary 60F05
\end{abstract}

\section{Introduction\label{secone}}

The purpose of this paper is to extend the notion of self-similarity to
one-parameter families of stochastic processes. Recall that a real-valued
stochastic process $\left\{ X(t),t\geq 0\right\} $ is called \emph{%
self-similar with} \emph{Hurst exponent} $H\in (0,1)$ if it satisfies the
scaling property 
\begin{equation}
X(ct)\overset{d}{=}c^{H}X(t)\text{ for }t\geq 0\text{,}  \label{stoch}
\end{equation}%
for all $c>0$, where $\overset{d}{=}$ denotes equality of the
finite-dimensional distributions 
%TCIMACRO{\TeXButton{Embrechts2002a}{\citep{Embrechts2002a}}}%
%BeginExpansion
\citep{Embrechts2002a}%
%EndExpansion
. The corresponding process with drift $\mu \in \mathbb{R}$, defined by%
\begin{equation}
X(\mu ;t)=X(t)+\mu t\text{ for }t\geq 0\text{,}  \label{drift}
\end{equation}%
does not, however, satisfy (\ref{stoch}) for $\mu \neq 0,$ an example being
Brownian motion with drift. Instead, the family of processes (\ref{drift})
satisfies the following scaling property,%
\begin{equation}
X(\mu c^{H-1};ct)\overset{d}{=}c^{H}X(\mu ;t)\text{ for }t\geq 0\text{,}
\label{sdrift}
\end{equation}%
for all $c>0$, and we shall hence propose (\ref{sdrift}) as a new definition
of self-similarity for families of stochastic processes, cf. Definition \ref%
{ss-def} below.

We show that there are in fact many important families of stochastic
processes that satisfy (\ref{sdrift}) without having the drift form (\ref%
{drift}), and for values of $H$ that do not necessarily belong to $(0,1)$.
One such example is the class of Hougaard L\'{e}vy processes 
%TCIMACRO{\TeXButton{Lee1993}{\citep{Lee1993}}}%
%BeginExpansion
\citep{Lee1993}%
%EndExpansion
, which includes for example the family of Poisson processes ($H=0$),
certain gamma compound Poisson processes ($H<0$), and the family of inverse
Gaussian processes ($H=2$), cf. 
%TCIMACRO{\TeXButton{Wasan1968}{\citet{Wasan1968}}}%
%BeginExpansion
\citet{Wasan1968}%
%EndExpansion
. A further example is a new class of fractional Hougaard motions defined as
moving averages of Hougaard L\'{e}vy processes, generalizing fractional
Brownian motion. As we shall see, such processes have many properties in
common with ordinary self-similar processes, as reflected for example in the
familiar form of their covariance functions. This represents an important
step forward compared with conventional self-similar processes, where the
only processes with finite variance are the fractional Brownian motions.

In Section~\ref{secself-similar}, we present the new notion of
self-similarity for families of stochastic processes. We show that their
covariance structure is completely determined by the so-called variance
function, and we study the role of power variance functions. In Section~\ref%
{secExpo} we explore the connection between self-similarity and exponential
tilting, and show the self-similarity of Hougaard L\'{e}vy processes. In
Section~\ref{fracsystems} we show the self-similarity of the class of
fractional Hougaard motions using their moving average representation. In
Section~\ref{secLampertiTrans}, we introduce a Lamperti-type transformation,
which transforms a self-similar family into a family of stationary
processes. In Section~\ref{secLamperti} we consider some Lamperti-type limit
theorems, where families of self-similar processes appear as limits of
suitably scaled families of stochastic processes. Finally, in Section \ref%
{twinfty}, we investigate the case $H=1$ and its relation with exponential
variance functions.

\section{Self-similarity\label{secself-similar}}

\subsection{Definition\label{sec:def}}

We consider a family of real-valued stochastic processes $X=\left\{ X(\mu
;t):\mu \in \overline{\Omega },t\geq 0\right\} ,$ all defined on the same
probability space, where $t$ denotes time, and where the family is indexed
by a real parameter $\mu \in \overline{\Omega },$ where $\overline{\Omega }$
is an interval satisfying $\mathbb{R}_{+}\subseteq \overline{\Omega }$ and $c%
\overline{\Omega }\subseteq \overline{\Omega }$ for all $c>0$. We allow $%
\overline{\Omega }$ to contain the values $\pm \infty $, and define $\Omega =%
\mathrm{int}\overline{\Omega }$, the interior of $\overline{\Omega }$. These
conventions are important for the examples that we will discuss. Motivated
by the above discussion of processes with drift, we now propose an extended
definition of self-similarity for families of stochastic processes. In the
following we use the shorthand notation $X(\mu ;\cdot )=\left\{ X(\mu
;t):t\geq 0\right\} $ etc. to denote a particular stochastic process in $X$.

\begin{definition}
\label{ss-def}\textbf{(i)} A family of stochastic processes $X$ is called 
\emph{self-similar with Hurst exponent} $H\in \mathbb{R}$ and \emph{rate
parameter} $\mu \in \overline{\Omega }$ ($H$-SS) if 
\begin{equation}
X(\mu c^{H-1};ct)\overset{d}{=}c^{H}X(\mu ;t)\text{ for }t\geq 0\text{,}
\label{H-ss}
\end{equation}%
for all $c>0$ and $\mu \in \overline{\Omega }$. \textbf{(ii)} We say that $X$
has \emph{stationary increments} if each process in the family has
stationary increments, that is, for all $s>0$ and $\mu \in \overline{\Omega }
$ 
\begin{equation}
X(\mu ;s+t)-X(\mu ;s)\overset{d}{=}X(\mu ;t)-X(\mu ;0)\text{ for }t\geq 0%
\text{.}  \label{additive}
\end{equation}%
The family $X$ is called $H$-SSSI if it is $H$-SS and has stationary
increments.
\end{definition}

When necessary, we refer to the definition (\ref{H-ss}) as \emph{general
self-similarity}, whereas the conventional definition (\ref{stoch}) for
stochastic processes will be called \emph{strict self-similarity} from now
on; a terminology borrowed from stable distributions. The motivation for
expanding the use of the term self-similar in this way is that many
properties of strictly self-similar processes have immediate analogues for
general self-similar families, in particular for the covariance structure
(cf. Section~\ref{secCov}).

A family of processes with drift $X=\left\{ X(t)+\mu t:\mu \in \mathbb{R}%
,t\geq 0\right\} $ is general sense $H$-SS if and only if the stochastic
process $X(\cdot )$ is strictly self-similar with Hurst exponent $H$. It is
clear that general self-similarity constitutes an extension of strict
self-similarity, since if $X$ satisfies (\ref{H-ss}), then the distribution
of $X(\mu ;t)-\mu t$ will generally depend on $\mu $, as illustrated by the
examples in Sections \ref{ExHou} and \ref{fracsystems}. Nevertheless $\mu $
plays the role of rate for the process $X(\mu ;\cdot )$, in the sense that
the dimension of $\mu $ is the unit of the process $X(\mu ;\cdot )$ per unit
of time, thereby providing an extension of the idea of a drift parameter.
When applicable, the values $\mu =0$ and $\mu =\pm \infty $ in (\ref{H-ss})
correspond to the strictly self-similar processes $X(0;\cdot )$ and $X(\pm
\infty ;\cdot )$, respectively.

An equivalent way of writing the definition (\ref{H-ss}) is as follows,

\begin{equation}
c^{-H}X(\mu c^{H-1};ct)\overset{d}{=}X(\mu ;t)\text{ for }t\geq 0\text{,}
\label{fixedpoint}
\end{equation}%
for all $c>0$ and $\mu \in \overline{\Omega }$. Noting that the right-hand
side of (\ref{fixedpoint}) is independent of $c$, we observe a collapse of
the scaled marginal distributions of $X$ onto a single distribution
independent of $c$, much like the notion of universality in turbulence, see 
\cite{Barndorff-Nielsen2008} and references therein. Taking $c=1/t$ in (\ref%
{fixedpoint}), we obtain the useful relation%
\begin{equation}
t^{H}X(\mu t^{1-H};1)\overset{d}{\sim }X(\mu ;t)\text{ for }t>0\text{,}
\label{solve}
\end{equation}%
for each $\mu \in \overline{\Omega }$, where $\overset{d}{\sim }$ denotes
equality of the marginal distributions for each $t$. Given $H\in \mathbb{R}$%
, the left-hand side of (\ref{fixedpoint}) represents a multiplicative
transformation group $\left\{ R_{c}^{H}:c>0\right\} ,$ defined by 
\begin{equation}
R_{c}^{H}X(\mu ;t)=c^{-H}X(\mu c^{H-1};ct)\text{.}  \label{group}
\end{equation}%
This is a continuous analogue of the \emph{renormalization group} in the
sense of \cite{Jona-Lasinio1975}, see also 
%TCIMACRO{\TeXButton{citet Embrechts}{\citet[p.~15]{ Embrechts2002a}}}%
%BeginExpansion
\citet[p.~15]{ Embrechts2002a}%
%EndExpansion
, with fixed point given by (\ref{fixedpoint}), a topic that we shall return
to in Section~\ref{secLamperti}.

The general interpretation of the definition (\ref{H-ss}) is that a
rescaling of $X(\mu ;t)$ is equivalent in distribution to a simultaneous
rescaling of time $t$ and rate $\mu ,$ so that non-trivial solutions to (\ref%
{H-ss}) require that the different processes of the family are suitably
linked, for example by a drift term, as in (\ref{drift}), or by means of
exponential tilting, cf. Section~\ref{secExpo}. In fact, (\ref{solve})
suggests that a simple rescaling $t^{H}X(\mu t^{1-H})$ of a given process $%
X(\cdot )$ provides a trivial solution to (\ref{H-ss}). Similarly, for $%
H\neq 1$ and $\mu \neq 0$ we may take $c=\left\vert \mu \right\vert
^{1/(1-H)}$ in (\ref{fixedpoint}), which yields 
\begin{equation*}
X(\mu ;t)\overset{d}{=}\left\vert \mu \right\vert ^{H/(H-1)}X(\mathrm{sgn}%
(\mu );\left\vert \mu \right\vert ^{1/(1-H)}t)\text{ for }t\geq 0\text{,}
\end{equation*}%
where $\mathrm{sgn}(\mu )$ is the sign of $\mu $. Hence, if $X(\cdot )$ is a
given stochastic process, then the rescaled family defined by $X(\mu ;t)=\mu
^{H/(H-1)}X(\mu ^{1/(1-H)}t)$ for $\mu ,t>0$ is also a trivial solution to (%
\ref{H-ss}).

Note that taking $t=0$ in (\ref{H-ss}) gives $X(\mu c^{H-1};0)\overset{d}{%
\sim }c^{H}X(\mu ;0)$, which shows that if $X(\mu ;0)\overset{d}{\sim }0$
for one value of $\mu $, then the same is the case for all values of $\mu $
with the same sign. In most cases we have $X(\mu ;0)\equiv 0$.

\subsection{Covariance structure\label{secCov}}

Let $X$ be an $H$-SSSI family of stochastic processes. If $X(\mu ;1)$ has
finite second moments for all $\mu \in \Omega $ (second-moment assumptions)
then (\ref{solve}) implies that $X(\mu ;t)$ has finite second moments for
all $t>0$ and $\mu \in \Omega $. We shall now explore the covariance
structure for such families. Most of the results in the following require $%
H\neq 1,$ and the case $H=1$ will be considered separately in Section \ref%
{twinfty}. The next result concerns the structure of the first moment.

\begin{proposition}
\label{propmean}Let $X$ be an $H$-SSSI family of stochastic processes with $%
H\neq 1$, and assume that $X(\mu ;t)$ has finite expectation for all $\mu
\in \Omega $ and $t\geq 0$. If the function $t\mapsto \mathbb{E}\left[ X(\mu
;t)\right] $ is continuous on $[0,\infty )$ for each $\mu \in \Omega $, then
there exist functions $a$ and $b$ such that 
\begin{equation}
\mathbb{E}\left[ X(\mu ;t)\right] =a(\mathrm{sgn}(\mu ))\left\vert \mu
\right\vert ^{H/(H-1)}+b(\mathrm{sgn}(\mu ))\left\vert \mu \right\vert t%
\text{ for }t\geq 0\text{ and }\mu \in \Omega \text{.}  \label{rate}
\end{equation}%
If $\Omega =\mathbb{R}$ and $0\leq H<1$, then continuity of the function $%
\mu \mapsto \mathbb{E}\left[ X(\mu ;t)\right] $ implies $a\equiv 0$.
\end{proposition}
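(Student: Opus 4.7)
Set $m(\mu,t)=\mathbb{E}[X(\mu;t)]$. The idea is to combine the stationary-increments property with the scaling identity (\ref{H-ss}) to force $m$ to be affine in $t$, and then read off the $\mu$-dependence of its coefficients from a pair of one-dimensional scaling equations that are solvable because $H\neq 1$.

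First I would take expectations throughout. The stationary-increments identity (\ref{additive}) makes $g_{\mu}(t):=m(\mu,t)-m(\mu,0)$ satisfy Cauchy's equation $g_{\mu}(s+t)=g_{\mu}(s)+g_{\mu}(t)$ on $[0,\infty)$. Since $t\mapsto m(\mu,t)$ is continuous by hypothesis, so is $g_{\mu}$, and hence $g_{\mu}(t)=\beta(\mu)\,t$ for some real $\beta(\mu)$. This gives $m(\mu,t)=\alpha(\mu)+\beta(\mu)\,t$ with $\alpha(\mu):=m(\mu,0)$.

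Next, substituting this affine form into the expectation of (\ref{H-ss}), namely
\begin{equation*}
\alpha(\mu c^{H-1})+\beta(\mu c^{H-1})\,ct \;=\; c^{H}\alpha(\mu)+c^{H}\beta(\mu)\,t,
\end{equation*}
and matching the constant and linear terms in $t$ produces the two scaling identities
\begin{equation*}
\alpha(\mu c^{H-1})=c^{H}\alpha(\mu),\qquad \beta(\mu c^{H-1})=c^{H-1}\beta(\mu),\qquad c>0,\ \mu\in\Omega.
\end{equation*}
Because $H\neq 1$, for any nonzero $\mu\in\Omega$ the choice $c=|\mu|^{1/(1-H)}$ gives $\mu c^{H-1}=\mathrm{sgn}(\mu)$ and $c^{H}=|\mu|^{H/(1-H)}$. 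Solving the two identities at this value of $c$ yields $\alpha(\mu)=|\mu|^{H/(H-1)}\alpha(\mathrm{sgn}(\mu))$ and $\beta(\mu)=|\mu|\,\beta(\mathrm{sgn}(\mu))$. Setting $a(\pm 1)=\alpha(\pm 1)$ and $b(\pm 1)=\beta(\pm 1)$ then delivers (\ref{rate}), and the symmetry of the argument on the two half-lines takes care of both signs.

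For the last assertion, let $\Omega=\mathbb{R}$. If $0<H<1$ the exponent $H/(H-1)$ is strictly negative, so $|\mu|^{H/(H-1)}\to\infty$ as $\mu\to 0$; continuity of $\mu\mapsto m(\mu,t)$ at the origin then forces $\alpha(\pm 1)=0$, i.e.\ $a\equiv 0$. The step I expect to be slightly delicate is the degenerate exponent at $H=0$: there $|\mu|^{H/(H-1)}\equiv 1$, and continuity alone only gives $a(+1)=a(-1)$. One closes the argument by specialising (\ref{H-ss}) to $\mu=0$, which implies $m(0,t)$ is constant in $t$, and then combining continuity at $\mu=0$ with the normalisation $X(\mu;0)\equiv 0$ recalled at the end of Section~\ref{sec:def} to conclude $a(+1)=a(-1)=m(0,0)=0$. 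Keeping the sign of $\mu$ straight throughout (the group action $\mu\mapsto\mu c^{H-1}$ preserves signs for $c>0$) is the only other point needing care.
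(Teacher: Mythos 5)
Your argument is correct and follows essentially the same route as the paper's: stationary increments plus continuity give the affine form $a(\mu)+b(\mu)t$ via the Cauchy functional equation, matching coefficients in the expectation of (\ref{H-ss}) yields the two homogeneity identities for $a$ and $b$, and the substitution $c=|\mu|^{1/(1-H)}$ is exactly the paper's step of reducing to $\mu=\pm1$. Your separate treatment of the degenerate exponent at $H=0$ is in fact slightly more explicit than the paper's, which folds that case into the appeal to $X(0;0)\equiv 0$ for the strictly self-similar process $X(0;\cdot)$.
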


\begin{proof}
For given $\mu \in \Omega $, the assumption of stationary increments (\ref%
{additive}) implies that, for $s,t\geq 0$, 
\begin{equation*}
\mathbb{E}\left[ X(\mu ;s+t)\right] +\mathbb{E}\left[ X(\mu ;0)\right] =%
\mathbb{E}\left[ X(\mu ;s)\right] +\mathbb{E}\left[ X(\mu ;t)\right] \text{.}
\end{equation*}%
Using standard results for the Cauchy functional equation 
%TCIMACRO{\TeXButton{citep Bingham 1987}{\citep[cf.][p.~4]{Bingham1987}}}%
%BeginExpansion
\citep[cf.][p.~4]{Bingham1987}%
%EndExpansion
, the continuity of $\mathbb{E}\left[ X(\mu ;\cdot )\right] $ implies that 
\begin{equation}
\mathbb{E}\left[ X(\mu ;t)\right] =a(\mu )+b(\mu )t\text{ for }t\geq 0\text{,%
}  \label{themean}
\end{equation}%
for suitable function $a$ and $b$ defined on $\Omega $. In view of (\ref%
{H-ss}), we obtain, for all $c,t>0$ and $\mu \in \Omega $, 
\begin{equation*}
a(\mu c^{H-1})+b(\mu c^{H-1})ct=c^{H}a(\mu )+c^{H}b(\mu )t\text{,}
\end{equation*}%
which implies that 
\begin{equation*}
a(\mu c^{H-1})=c^{H}a(\mu )\text{ and }b(\mu c^{H-1})=c^{H-1}b(\mu )\text{.}
\end{equation*}%
By inserting $\mu =\pm 1$ and $x=\pm c^{H-1}$ in these equations, we obtain $%
a(x)=\left\vert x\right\vert ^{H/(H-1)}a(\mathrm{sgn}(x))$ and $%
b(x)=\left\vert x\right\vert b(\mathrm{sgn}(x))$, in agreement with (\ref%
{rate}) for $\mu \neq 0$.

If $\Omega =\mathbb{R}$, such that $0\in \Omega $, then we have already seen
in Section~\ref{sec:def} that the process $X(0;\cdot )$ is strictly
self-similar, in which case it is well-known that $X(0;0)\equiv 0$. This is
in agreement with the limit of (\ref{rate}) as $\mu \rightarrow 0$, provided
that either $a\equiv 0$ or $\left\vert \mu \right\vert ^{H/(H-1)}\rightarrow
0$ as $\mu \rightarrow 0$, the latter being the case outside the interval $%
0\leq H\leq 1$. Hence in the case $0\leq H<1$, continuity of the function $%
\mu \mapsto \mathbb{E}\left[ X(\mu ;t)\right] $ implies $a\equiv 0$. This
completes the proof.
\end{proof}

The constant term $a(\mathrm{sgn}(\mu ))\left\vert \mu \right\vert
^{H/(H-1)} $ may be removed from the processes by subtraction. Thus, if the
family $X$ satisfies the conditions of Proposition \ref{propmean}, we define
the corresponding \emph{centered family} $X_{0}$ by%
\begin{equation}
X_{0}(\mu ;t)=X(\mu ;t)-a(\mathrm{sgn}(\mu ))\left\vert \mu \right\vert
^{H/(H-1)}\text{.}  \label{famcen}
\end{equation}%
It is easy to check that the family $X_{0}$ is again $H$-SSSI, now with $a(%
\mathrm{sgn}(\mu ))\equiv 0$. In many cases the mean function (\ref{rate})
for the centered family $X_{0}$ takes the following simple form 
\begin{equation}
\mathbb{E}\left[ X_{0}(\mu ;t)\right] =\mu t\text{ for }t\geq 0\text{ and }%
\mu \in \Omega \text{,}  \label{centeredmean}
\end{equation}%
corresponding to $b(\mu )=\mu $. The latter form may be assumed without loss
of generality (i.e. up to a rescaling of $\mu $) in the important case $%
\Omega =\mathbb{R}_{+}$ whereas for $\Omega =\mathbb{R}$ this requires $%
b(1)=b(-1)$. From now on we assume that the family $X$ is centered, unless
otherwise stated.

We shall now derive the covariance structure of the family $X$ under
second-moment assumptions, which is done in much the same way as for
strictly self-similar processes \citep[cf.][]{Taqqu2003}. Let us introduce
the \emph{variance function} $V:\Omega \rightarrow \mathbb{R}_{+}$ defined
by\ 
\begin{equation}
V(\mu )=\mathrm{Var}\left[ X(\mu ;1)\right] \text{ for }\mu \in \Omega .
\label{unitvf}
\end{equation}%
Consider a given value of $\mu \in \Omega $. The marginal variance of the
process may be obtained by calculating the variance on both sides of (\ref%
{solve}), giving 
\begin{equation}
\mathrm{Var}\left[ X(\mu ;t)\right] =t^{2H}V(\mu t^{1-H})=V_{H}(\mu ;t)\text{%
,}  \label{variance}
\end{equation}%
say, for all $t>0$. The whole covariance structure of $X$ may now be
expressed in terms of the variance function $V$. Thus, for $s,t>0$ we obtain 
\begin{equation}
\mathrm{Var}\left[ X(\mu ;t)-X(\mu ;s)\right] =\mathrm{Var}\left[ X(\mu ;s)%
\right] +\mathrm{Var}\left[ X(\mu ;t)\right] -2\mathrm{Cov}\left[ X(\mu
;s),X(\mu ;t)\right] \text{.}  \label{Var}
\end{equation}%
Using the stationarity of the increments together with (\ref{variance}),
this gives the following covariance function for the process $X(\mu ;\cdot )$
for $s,t>0$, 
\begin{equation}
\mathrm{Cov}\left[ X(\mu ;s),X(\mu ;t)\right] =\frac{1}{2}\left[ V_{H}\left(
\mu ;s\right) +V_{H}\left( \mu ;t\right) -V_{H}\left( \mu ;\left\vert
t-s\right\vert \right) \right] \text{.}  \label{covariance}
\end{equation}

\subsection{Power variance functions\label{secpower}}

Let us consider the important case where a centered $H$-SSSI family of
stochastic processes $X$ has power variance function $V(\mu )=\sigma ^{2}\mu
^{p}$ for some $p\in \mathbb{R}$ and $\sigma ^{2}>0$, where $p$ is called
the \emph{power parameter}. Here we take the interior of the domain for $\mu 
$ to be $\Omega _{0}=\mathbb{R}$ for $p=0$ and $\Omega _{p}=\mathbb{R}_{+}$
for $p\neq 0$. In this case, the variance (\ref{variance}) becomes 
\begin{equation}
\mathrm{Var}\left[ X(\mu ;t)\right] =\sigma ^{2}\mu ^{p}t^{2H+p(1-H)}=\sigma
^{2}\mu ^{p}t^{2-D}\text{ for }t>0\text{,}  \label{p-var}
\end{equation}%
say, where $D$ is the \emph{fractal dimension} defined by 
\begin{equation}
D=\left( H-1\right) (p-2)\text{,}  \label{D-def}
\end{equation}%
with domain $D\in \lbrack 0,2]$ (cf. Lemma \ref{H-sign} below).

The covariance function (\ref{covariance}) now takes the form%
\begin{equation}
\mathrm{Cov}\left[ X(\mu ;s),X(\mu ;t)\right] =\sigma ^{2}\mu ^{p}R_{D}(s,t)%
\text{ for }s,t>0\text{,}  \label{Covar}
\end{equation}%
where 
\begin{equation*}
R_{D}(s,t)=\frac{1}{2}\left[ s^{2-D}+t^{2-D}-\left\vert t-s\right\vert ^{2-D}%
\right] \text{ for }s,t>0\text{.}
\end{equation*}%
Compared with (\ref{covariance}), the covariance function (\ref{Covar}) is
now a product of the variance function $\sigma ^{2}\mu ^{p}$ and a function
depending on $s$ and $t$ only$.$ Let us also consider the correlation
between two non-overlapping increments, which may be expressed in terms of $%
r=\sqrt{s/t}$,%
\begin{equation}
\mathrm{Corr}\left[ X(\mu ;s),X(\mu ;s+t)-X(\mu ;s)\right] =\frac{1}{2}\left[
\left( r^{-1}+r\right) ^{2-D}-r^{2-D}-r^{-2+D}\right] \text{ for }s,t>0\text{%
.}  \label{CovCorr}
\end{equation}

\begin{lemma}
\label{H-sign}The function $R_{D}$ is non--negative definite if and only if $%
0\leq D\leq 2.$
\end{lemma}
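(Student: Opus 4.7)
The plan is to recognize $R_D$ as (a rescaling of) the covariance function of fractional Brownian motion. Setting $\alpha=2-D$, we can rewrite
\[
R_D(s,t)=\tfrac{1}{2}\bigl[s^{\alpha}+t^{\alpha}-|t-s|^{\alpha}\bigr],\qquad s,t>0,
\]
so that the condition $0\le D\le 2$ becomes $0\le\alpha\le 2$. The statement then coincides with the classical characterization of admissible Hurst parameters for fractional Brownian motion (fBm), via the identification $\alpha=2H'$ with $H'\in[0,1]$.

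For sufficiency, I would split into three cases. For $0<D<2$ (i.e., $0<\alpha<2$), I would set $H'=\alpha/2\in(0,1)$ and invoke the existence of fractional Brownian motion with Hurst index $H'$ (see, e.g., Samorodnitsky and Taqqu, or Embrechts and Maejima), whose covariance is exactly $R_D(s,t)$. Hence $R_D$ is non-negative definite. For $D=0$ ($\alpha=2$), a direct expansion gives $R_0(s,t)=st$, the covariance of $tZ$ for a single standard normal variate $Z$, trivially non-negative definite. For $D=2$ ($\alpha=0$), with the convention $0^0=0$ the matrix $[R_2(s_i,s_j)]$ at distinct points $s_1,\dots,s_n>0$ equals $\tfrac{1}{2}(I+J)$, where $J$ is the all-ones matrix, whose eigenvalues $1/2$ and $(n+1)/2$ are non-negative.

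For necessity, I would exhibit explicit two-point counterexamples on each side of the admissible range. If $D<0$ (i.e., $\alpha>2$), take $s_1=1,s_2=2$: the resulting $2\times2$ matrix has determinant
\[
1\cdot 2^{\alpha}-\bigl(2^{\alpha-1}\bigr)^{2}=2^{\alpha-2}(4-2^{\alpha}),
\]
which is negative precisely for $\alpha>2$. If $D>2$ (i.e., $\alpha<0$), take $s_1=1$, $s_2=1+\varepsilon$; since $\varepsilon^{\alpha}\to+\infty$ as $\varepsilon\downarrow 0$, the off-diagonal entry $R_D(s_1,s_2)\to-\infty$ while the diagonal entries remain bounded, so the $2\times 2$ determinant is negative for all sufficiently small $\varepsilon$.

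The only nontrivial step is the existence of fBm in the interior range $0<\alpha<2$; the remaining arguments are elementary. Since this existence is a standard result, I would simply cite it rather than redo the spectral/integral representation, which would be the main technical work if one insisted on a self-contained proof.
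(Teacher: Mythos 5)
Your proof is correct, and the sufficiency half is essentially the paper's own argument: both reduce $R_D$ to the covariance of fractional Brownian motion with Hurst index $(2-D)/2$ and cite its non-negative definiteness. The necessity half is where you diverge in mechanism, though not in spirit: the paper bounds the correlation between $X(\mu;s)$ and the non-overlapping increment $X(\mu;s+t)-X(\mu;s)$, obtaining $D\geq 0$ from the value at $r=1$ and $D\leq 2$ from the asymptotics as $r\to\infty$, whereas you compute determinants of explicit $2\times 2$ Gram matrices (at $s_1=1,s_2=2$ for $D<0$, and at $s_1=1,s_2=1+\varepsilon$ for $D>2$). Your version is marginally more self-contained, since it tests non-negative definiteness of the kernel directly rather than presupposing a process with stationary increments whose correlations must lie in $[-1,1]$; and you also treat the endpoints $D=0$ (covariance $st$ of a degenerate line process) and $D=2$ (the matrix $\tfrac{1}{2}(I+J)$) explicitly, which the paper absorbs into the blanket citation for $0\leq H\leq 1$. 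Both routes are elementary and complete; nothing is missing from yours.
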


\begin{proof}
Let us first take $p=0$ in (\ref{D-def}), corresponding to $D=2\left(
1-H\right) ,$ in which case 
\begin{equation*}
R_{2\left( 1-H\right) }(s,t)=\frac{1}{2}\left[ s^{2H}+t^{2H}-\left\vert
t-s\right\vert ^{2H}\right]
\end{equation*}%
has the same functional form as the covariance function of standard
fractional Brownian motion, which is known to be non-negative definite for $%
0\leq H\leq 1$ 
%TCIMACRO{%
%\TeXButton{citep Taqqu2003,Embrechts2002a}{\citep{Taqqu2003,Embrechts2002a}}}%
%BeginExpansion
\citep{Taqqu2003,Embrechts2002a}%
%EndExpansion
. It follows that $R_{D}$ is non-negative definite for any $D\in \lbrack
0,2] $. Let us now show the necessity of this condition. Since the
correlation (\ref{CovCorr}) must be less than or equal to 1, we obtain for $%
r=1$ that $\left( 2^{2-D}-1-1\right) /2\leq 1$, which is equivalent to $%
D\geq 0$. Now consider the case $D>2,$ where the function on the right-hand
side of (\ref{CovCorr}) behaves asymptotically as $-r^{-2+D}/2$ as $%
r\rightarrow \infty $. For $r$ large this is in contradiction with (\ref%
{CovCorr}) being a correlation, so that we must have $D\leq 2$.
\end{proof}

\begin{lemma}
\label{D-sign}Let $X$ be an $H$-SSSI family with power variance function.
Then

\begin{enumerate}
\item \label{Four}The correlation (\ref{CovCorr}) is positive for $0\leq D<1$
and negative for $1<D\leq 2$.

\item \label{Three}The process $X(\mu ;\cdot )$ has uncorrelated increments
if and only if $D=1$, where (\ref{Covar}) becomes 
\begin{equation}
\mathrm{Cov}\left[ X(\mu ;s),X(\mu ;t)\right] =\frac{1}{2}\sigma ^{2}\mu
^{p}\min \left\{ s,t\right\} \text{.}  \label{minst}
\end{equation}
\end{enumerate}
\end{lemma}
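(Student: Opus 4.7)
The plan is to reduce both parts of the lemma to a single convexity dichotomy for the power function $f(x) = x^{\alpha}$ on $[0,\infty)$, where I set $\alpha := 2 - D \in [0,2]$. Writing $a = r$ and $b = r^{-1}$ inside (\ref{CovCorr}), the correlation equals $\tfrac12 \Phi(a,b;\alpha)$ with
\[
\Phi(a,b;\alpha) := (a+b)^{\alpha} - a^{\alpha} - b^{\alpha}, \qquad a,b > 0,
\]
so the sign of the correlation is exactly the sign of $\Phi$.

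Next I would record that $f(0) = 0$ (for $\alpha > 0$) and that $f$ is strictly convex for $\alpha > 1$, affine for $\alpha = 1$, and strictly concave for $0 < \alpha < 1$. A one-line argument — writing $a = \tfrac{a}{a+b}(a+b) + \tfrac{b}{a+b}\cdot 0$ and applying convexity (resp.\ concavity) together with $f(0) = 0$ — yields strict superadditivity $\Phi > 0$ when $\alpha > 1$, strict subadditivity $\Phi < 0$ when $0 < \alpha < 1$, and $\Phi = 0$ when $\alpha = 1$. The edge case $\alpha = 0$ (i.e.\ $D = 2$) is immediate: $\Phi = 1 - 1 - 1 = -1 < 0$. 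Translating via $\alpha = 2 - D$ gives item \ref{Four}.

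For item \ref{Three}, the \emph{only if} direction follows directly from item \ref{Four}: if the process has uncorrelated increments then the correlation in (\ref{CovCorr}) vanishes, which is only compatible with $D = 1$. For the converse, substituting $D = 1$ into $R_D$ gives $R_1(s,t) = \tfrac12[s + t - |t-s|] = \min\{s,t\}$, which inserted into (\ref{Covar}) yields the displayed covariance expression. To verify that any two non-overlapping increments $X(\mu;b) - X(\mu;a)$ and $X(\mu;d) - X(\mu;c)$ with $a \leq b \leq c \leq d$ are uncorrelated, expand by bilinearity and apply $R_1 = \min$:
\[
\mathrm{Cov}\bigl[X(\mu;b) - X(\mu;a),\, X(\mu;d) - X(\mu;c)\bigr] = \sigma^{2}\mu^{p}\bigl[R_{1}(b,d) - R_{1}(b,c) - R_{1}(a,d) + R_{1}(a,c)\bigr] = \sigma^{2}\mu^{p}(b - b - a + a) = 0.
\]

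There is no serious technical obstacle; the argument rests entirely on the super-/subadditivity of power functions and one bookkeeping calculation. The only mild subtlety is covering \emph{all} pairs of non-overlapping increments in item \ref{Three} — not only the specific pair appearing in (\ref{CovCorr}) — which is dispatched cleanly by the $\min$-cancellation displayed above.
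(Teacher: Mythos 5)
Your proof is correct, and for item \ref{Four} it takes a genuinely different route from the paper. The paper substitutes $r = e^{x}$ and reduces the sign question to the inequality $(2-D)f(x) > f\left( (2-D)x\right) $ for the strictly convex function $f(x)=\log (e^{x}+e^{-x})$, which it then settles using convexity together with the asymptotics $f(x)\sim \left\vert x\right\vert $; making that step airtight requires a small extra monotonicity argument (the difference $(2-D)f(x)-f((2-D)x)$ is positive at $x=0$, decreasing in $\left\vert x\right\vert $, and tends to $0$) that the paper leaves implicit. Your reduction to the super-/subadditivity of $g(x)=x^{2-D}$ on $[0,\infty )$ --- via the standard estimate $g(a)\leq \frac{a}{a+b}g(a+b)+\frac{b}{a+b}g(0)$ with $g(0)=0$ --- is more elementary, handles the endpoint $D=2$ explicitly, and delivers strictness for free. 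For item \ref{Three} you are in fact more thorough than the paper: the paper merely observes that the particular correlation (\ref{CovCorr}) vanishes when $D=1$ and that this is necessary by item \ref{Four}, whereas you verify via the cancellation $R_{1}(b,d)-R_{1}(b,c)-R_{1}(a,d)+R_{1}(a,c)=b-b-a+a=0$ that \emph{every} pair of non-overlapping increments is uncorrelated, which is what ``uncorrelated increments'' actually asserts. Both arguments establish the lemma; yours closes a small gap in the published one and is the cleaner of the two.
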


\begin{proof}
Item \ref{Four}. The correlation (\ref{CovCorr}) is positive if and only if%
\begin{equation*}
\left( r^{-1}+r\right) ^{2-D}>r^{2-D}+r^{-2+D}\text{.}
\end{equation*}%
In terms of the strictly convex function $f(x)=\log \left(
e^{x}+e^{-x}\right) ,$ this is equivalent to the inequality 
\begin{equation}
(2-D)f(x)>f\left( x(2-D)\right) \text{.}  \label{DDD}
\end{equation}%
Using the convexity of $f$ along with the fact that $f(x)$ behaves
asymptotically as $\left\vert x\right\vert $ for $\left\vert x\right\vert $
large, we find that the inequality (\ref{DDD}) is satisfied for $0\leq D<1$,
whereas the opposite inequality holds for $1<D\leq 2,$ which proves Item \ref%
{Four}.

Item \ref{Three}. The correlation (\ref{CovCorr}) is zero if $D=1$, which by
Item \ref{Four} is also necessary.
\end{proof}

It is convenient at this point to introduce the parameter $\alpha \in
\lbrack -\infty ,\infty )$, defined by the following one-to-one
transformation of $p\in (-\infty ,\infty ]$, 
\begin{equation}
\alpha =\alpha (p)=1+(1-p)^{-1}\text{,}  \label{alpha}
\end{equation}%
with the conventions that $\alpha (1)=-\infty $ and $\alpha (\infty )=1$. By
Item \ref{Three} of Lemma \ref{D-sign}, we find that the correlation between
two non-overlapping increments (\ref{CovCorr}) is zero if and only if $%
H=1/\alpha $, with the convention that $\alpha =-\infty $ corresponds to $%
H=0 $. A further reason for our interest in the parameter $\alpha $ comes
from the connection with $\alpha $-stable distributions in the case $\alpha
\in (0,2]$, see Section \ref{ExHou}, where we discuss a class of
self-similar families of L\'{e}vy processes.

%TCIMACRO{\TeXButton{B}{\begin{table}[tbp] \centering}}%
%BeginExpansion
\begin{table}[tbp] \centering%
%EndExpansion
\begin{tabular}{|c|c|c|}
\hline
$p$ & Positive correlation & Negative correlation \\ \hline
$p<2$ ($\alpha \notin \lbrack 0,1]$) & $1/\alpha <H\leq 1$ & $\left(
2-\alpha \right) /\alpha \leq H<1/\alpha $ \\ 
$p>2$ ($\alpha \in (0,1)$) & $1\leq H<1/\alpha $ & $1/\alpha <H\leq \left(
2-\alpha \right) /\alpha $ \\ \hline
\end{tabular}%
\caption{the sign of the correlation (\protect\ref{CovCorr}) as a function of
$\alpha$ and $H$.}\label{TableKey}%
%TCIMACRO{\TeXButton{E}{\end{table}}}%
%BeginExpansion
\end{table}%
%EndExpansion
We may now express the domain for the Hurst exponent $H,$ for given $\alpha $%
, as an interval with endpoints $1$ and $\left( 2-\alpha \right) /\alpha $,
which follows from the domain $0\leq D\leq 2$ via (\ref{D-def}) and (\ref%
{alpha}). This domain is summarized in Table \ref{TableKey}, along with the
sign of the correlation (\ref{CovCorr}) (Item \ref{Four} of Lemma \ref%
{H-sign}). By comparison, in the case of strict self-similarity, the Hurst
parameter is restricted to the domain $H>0$, and to the smaller domain $%
0<H\leq 1$ under second-moment assumptions, cf. 
%TCIMACRO{\TeXButton{Taqqu2003}{\citet{Taqqu2003}}}%
%BeginExpansion
\citet{Taqqu2003}%
%EndExpansion
.

The importance of power variance functions will become clear in Sections \ref%
{ExHou} and \ref{fracsystems}, where we give examples of processes with
independent increments and with correlated increments, respectively, giving
rise to $H$-SSSI families corresponding to many of the possible combinations
of $H$ and $p$ discussed above. We note in this connection that $D=0$ and $%
H\neq 1$ implies $p=2$ $(\alpha =0)$, in which case the covariance function (%
\ref{Covar}) takes the form 
\begin{equation}
\mathrm{Cov}\left[ X(\mu ;s),X(\mu ;t)\right] =\sigma ^{2}\mu ^{2}st\text{
for }s,t>0\text{.}  \label{gamma-case}
\end{equation}%
We shall return to this case in Section \ref{ExHou}. The other case where $%
D=0$, namely $H=1$, will be considered in Section \ref{twinfty}.

\section{Exponential tilting and Hougaard L\'{e}vy processes\label{secExpo}}

\subsection{Exponential tilting}

We now consider the important case where $X$ is a natural exponential family
of centered stochastic processes, cf. \cite{Kuchler1997}. Let $P_{\mu }^{t}$
denote the marginal distribution of $X(\mu ;t)$ under the probability
measure $P_{\mu },$ say, and let $\mathbb{E}_{\mu }$ and $\mathbb{E}_{\mu
}^{t}$ denote expectation under $P_{\mu }$ and $P_{\mu }^{t}$, respectively.
Let $\kappa $ denote the cumulant generating function of $P_{1}^{1}$,
defined by 
\begin{equation}
\kappa (\theta )=\log \mathbb{E}_{1}\left[ e^{\theta X(1;1)}\right] ,
\label{kappa}
\end{equation}%
with effective domain $\Theta =\left\{ \theta \in \mathbb{R}:\kappa (\theta
)<\infty \right\} $, assumed to have non-empty interior. Assuming that $%
\boldsymbol{P}^{t}=\left\{ P_{\mu }^{t}:\mu \in \overline{\Omega }\right\} $
is a natural exponential family for $t=1,$ we shall see that this is then
the case for all $t>0$. We then call $X$ an \emph{exponential} $H$-SSSI 
\emph{family}.

Let us consider the exponential change of measure from $P_{1}$ to $P_{\mu }$%
, corresponding to the likelihood ratio 
\begin{equation}
K_{\mu }(t)=\exp \left[ \theta _{1}^{t}X(1;t)-\kappa _{1}^{t}(\theta
_{1}^{t})\right] \text{,}  \label{exponential}
\end{equation}%
where $\kappa _{\mu }^{t}$ is the cumulant generating function for $P_{\mu
}^{t}$ and the canonical parameter $\theta _{1}^{t}$ is chosen such that $%
\mathbb{E}_{1}\left[ X(1;t)\right] =t$. The exponentially tilted probability
measure $P_{\mu }$ may then be defined from $P_{1}$ as follows: 
\begin{equation}
P_{\mu }^{t}(A)=\mathbb{E}_{1}\left[ 1_{A}K_{\mu }(t)\right] \text{,}
\label{exponential1}
\end{equation}%
where $1_{A}$ denotes the indicator function for the Borel set $A$. More
generally we have 
\begin{equation}
\mathbb{E}_{\mu }^{t}\left[ U\right] =\mathbb{E}_{1}\left[ UK_{\mu }(t)%
\right] \text{,}  \label{TiltingMean}
\end{equation}%
where $U$ is a non-negative random variable.

We shall now derive the cumulant generating function $\kappa _{\mu }^{t}$
for general $\mu $ and $t=1$. Standard exponential family arguments along
with (\ref{TiltingMean}) yield 
\begin{eqnarray}
\kappa _{\mu }^{1}(u) &=&\log \mathbb{E}_{\mu }\left[ e^{uX(\mu ;1)}\right] 
\notag \\
&=&\log \mathbb{E}_{1}\left[ K_{\mu }(1)e^{uX(1;1)}\right]  \notag \\
&=&\log \mathbb{E}_{1}\left[ e^{\left( \theta _{1}^{1}+u\right)
X(1;1)-\kappa (\theta _{1}^{1})}\right]  \notag \\
&=&\kappa (u+\theta _{1}^{1})-\kappa (\theta _{1}^{1})\text{,}
\label{Laplace}
\end{eqnarray}%
with effective domain $\Theta -\theta _{1}^{1}$. The mean and variance
function of $X(\mu ;1)$ are $\mu =\dot{\kappa}(\theta _{1}^{1})$ with domain 
$\overline{\Omega }=\dot{\kappa}(\Theta )$, and 
\begin{equation}
V(\mu )=\ddot{\kappa}\circ \dot{\kappa}^{-1}(\mu )\text{ for }\mu \in \Omega 
\text{,}  \label{Vfu}
\end{equation}%
respectively, where dots denote derivatives. The assumption that $P_{1}^{1}$
has mean $1$ implies that $\dot{\kappa}(0)=1$. It is well known that the
variance function $V$ together with its domain $\Omega $ characterize the
exponential family (\ref{Laplace}), cf. \citet[Ch.~2]{Jorgensen1997}.

Turning now to the case of general $t>0$, we obtain by means of (\ref{solve}%
) the following expression for the cumulant generating function $\kappa
_{\mu }^{t}$, 
\begin{eqnarray}
\kappa _{\mu }^{t}(u) &=&\log \mathbb{E}_{\mu }\left[ e^{uX(\mu ;t)}\right] 
\notag \\
&=&\log \mathbb{E}_{\mu }\left[ \exp \left( ut^{H}X(\mu t^{1-H};1)\right) %
\right]  \notag \\
&=&\kappa (ut^{H}+\theta _{\mu }^{t})-\kappa (\theta _{\mu }^{t})\text{,} 
\notag \\
&=&\kappa \left( t^{H}\left( u+\theta _{\mu }^{t}/t^{H}\right) \right)
-\kappa \left( t^{H}\left( \theta _{\mu }^{t}/t^{H}\right) \right) \text{,}
\label{kappa2}
\end{eqnarray}%
for values of $u$ such that $ut^{H}+\theta _{\mu }^{t}\in \Theta $, where $%
\theta _{\mu }^{t}$ is the solution to $\dot{\kappa}(\theta _{\mu
}^{t})t^{H-1}=\mu $. In particular, this implies that $X(\mu ;t)$ has mean $%
\mu t$ and variance given by (\ref{variance}), in agreement with the results
of Section~\ref{secCov}.

The form (\ref{kappa2}) shows that $\boldsymbol{P}^{t}$ is indeed a natural
exponential family for each $t>0$, corresponding to the cumulant generator $%
\kappa (t^{H}\cdot )$ and canonical parameter $\theta _{\mu }^{t}/t^{H}$. We
may interpret (\ref{kappa2}) as representing a translation structure in the
Fourier domain, as compared with the drift term $\mu t$ in (\ref{drift}),
which represents a translation structure in the sample space.

\subsection{Self-similarity of Hougaard L\'{e}vy processes\label{ExHou}}

We now consider the case where $X$ is a centered $H$-SSSI family of L\'{e}vy
processes. Under second-moment assumptions, the independence of the
increments implies the following form for the covariance function: 
\begin{equation}
\mathrm{Cov}\left[ X(\mu ;t),X(\mu ;t+s)\right] =\mathrm{Var}\left[ X(\mu ;t)%
\right] =tV(\mu )\text{ for }s,t>0.  \label{Houvar}
\end{equation}%
Under the further assumption that the family $X$ is exponential, it is clear
from the results of Section~\ref{secExpo} that the variance function $V$
with its domain $\Omega $ characterize the family $X$ among all exponential $%
H$-SSSI families, because $V$ characterizes the exponential family of
marginal distributions of $X(\mu ;1)$, which, in turn, characterizes the
corresponding family of L\'{e}vy processes.

From now on, we denote the marginal distribution of $t^{-1}X(\mu ;t)$ by the
symbol $\mathrm{ED}(\mu ,t)$, where $\mathrm{ED}(\mu ,t)$ is an \emph{%
exponential dispersion model} in the sense of 
%TCIMACRO{\TeXButton{Jorgensen1997}{\citet[Ch.~3]{Jorgensen1997}}}%
%BeginExpansion
\citet[Ch.~3]{Jorgensen1997}%
%EndExpansion
. This distribution has mean $\mu $ and variance $t^{-1}V(\mu )$ for $t>0$
and $\mu \in \Omega $. The above characterization of the family $X$ in terms
of the variance function $V$ may now be rephrased as follows. Namely, the
variance function $V$ with domain $\Omega $ characterizes the exponential
dispersion model $\mathrm{ED}(\mu ,t)$ among all exponential dispersion
models, cf. 
%TCIMACRO{\TeXButton{Jorgensen1997}{\citet[Ch.~3]{Jorgensen1997}}}%
%BeginExpansion
\citet[Ch.~3]{Jorgensen1997}%
%EndExpansion
.

Following \cite{jorgensen1992} and \cite{Lee1993}, we define a \emph{%
Hougaard L\'{e}vy process}$\ S_{p}(\mu ;t)$ to be an exponential family of L%
\'{e}vy process with power variance function $V(\mu )=\sigma ^{2}\mu ^{p}$,
see also 
%TCIMACRO{\TeXButton{Hougaard 1997}{\citet{Hougaard1997}}}%
%BeginExpansion
\citet{Hougaard1997}%
%EndExpansion
, 
%TCIMACRO{\TeXButton{Jorgensen1997}{\citet[Ch.~4]{Jorgensen1997}} }%
%BeginExpansion
\citet[Ch.~4]{Jorgensen1997}
%EndExpansion
and \cite{Vinogradov2008}. We define the \emph{Tweedie distribution} $%
\mathrm{Tw}_{p}(\mu ,t)$ to be the corresponding exponential dispersion
model, such that 
\begin{equation}
t^{-1}S_{p}(\mu ;t)\overset{d}{\sim }\mathrm{Tw}_{p}(\mu ,t)\text{,}
\label{Twee}
\end{equation}%
with mean $\mu $ and variance $t^{-1}\sigma ^{2}\mu ^{p}$ (note that we
suppress the parameter $\sigma ^{2}$ in the notation). The parameter domains
are $p\in \Delta =\mathbb{R\diagdown }(0,1)$, $\sigma ^{2}>0,$ and $\mu \in 
\overline{\Omega }_{p}$, where $\overline{\Omega }_{p}$ is defined by 
\begin{equation*}
\overline{\Omega }_{p}=\left\{ 
\begin{array}{lll}
\lbrack 0,\infty ) & \text{ for } & p<0 \\ 
\mathbb{R} & \text{ for } & p=0 \\ 
\mathbb{R}_{+} & \text{ for } & 1\leq p\leq 2 \\ 
(0,\infty ] & \text{ for } & p>2\text{.}%
\end{array}%
\right.
\end{equation*}

The Tweedie distribution satisfies the following scaling property for all $%
p\in \Delta $, $\mu \in \Omega _{p}$ and $t>0$, 
\begin{equation}
c\mathrm{Tw}_{p}(\mu ,t)=\mathrm{Tw}_{p}(c\mu ,c^{p-2}t)\text{ for }c>0\text{%
,}  \label{Tw}
\end{equation}%
which characterizes the Tweedie model among all exponential dispersion
models, see 
%TCIMACRO{\TeXButton{Jorgensen1997}{\citet[Ch.~4]{Jorgensen1997}}}%
%BeginExpansion
\citet[Ch.~4]{Jorgensen1997}%
%EndExpansion
. The Hougaard L\'{e}vy process $S_{p}(\mu ;t)$ is hence characterized by
the following distribution of the increments,%
\begin{equation}
S_{p}(\mu ;t)\overset{d}{\sim }t\mathrm{Tw}_{p}(\mu ,t)=\mathrm{Tw}_{p}(\mu
t,t^{p-1})\text{,}  \label{average}
\end{equation}%
with variance $t\sigma ^{2}\mu ^{p}$. We shall now characterize the Hougaard
L\'{e}vy processes in terms of self-similarity. Recall that the parameter $%
\alpha $ is defined as a function of $p$ by (\ref{alpha}), and note that $%
p\in \Delta $ corresponds to the domain $\alpha \in \lbrack -\infty ,1)\cup
(1,2]$.

\begin{theorem}
\label{Hougaard}The family of Hougaard L\'{e}vy processes $S_{p}(\mu ;\cdot
) $ may be characterized as follows. Let $X$ be an exponential family of L%
\'{e}vy processes. Then $X$ is self-similar with Hurst exponent%
\begin{equation}
H=\frac{1}{\alpha }\text{,}  \label{H-def}
\end{equation}%
with the convention that $\alpha =-\infty $ ($p=1$) corresponds to $H=0$, if
and only if $X$ has power variance function with power $p\neq 2$.
\end{theorem}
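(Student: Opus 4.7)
The plan is to convert self-similarity of $X$ into a functional equation for the variance function $V$ and then invoke the variance-function characterization of exponential dispersion models recalled in Section~\ref{ExHou} to identify $X$ with a Hougaard L\'{e}vy process.

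For the necessity direction, I would start from two identities that both hold for an exponential $H$-SSSI family of L\'{e}vy processes: the self-similarity consequence (\ref{variance}), which gives $\mathrm{Var}[X(\mu;t)]=t^{2H}V(\mu t^{1-H})$, and the independence of increments (\ref{Houvar}), which gives $\mathrm{Var}[X(\mu;t)]=tV(\mu)$. Equating these yields the functional equation
\begin{equation*}
V(\mu t^{1-H})=t^{1-2H}V(\mu)\quad\text{for all }t>0,\ \mu\in\Omega .
\end{equation*}
Since $H=1/\alpha$ with $\alpha\neq 1$, we have $H\neq 1$, so the substitution $s=t^{1-H}$ is a bijection of $(0,\infty)$; setting $\mu=1$ gives $V(s)=\sigma^{2}s^{p}$ on $\mathbb{R}_{+}$ with $p=(2H-1)/(H-1)$, and a short computation using (\ref{alpha}) shows that $p=(2-\alpha)/(1-\alpha)$, which is equivalent to $H=1/\alpha$. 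Because $H\in\mathbb{R}$ forces $\alpha\neq 0$, we also get $p\neq 2$. The relation then propagates to the whole exponential family $X$ via the variance-function characterization, placing $X$ in the Hougaard class.

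For the sufficiency direction, I would use the Tweedie scaling (\ref{Tw}) together with the distributional identity (\ref{average}) at the level of one-dimensional marginals. Computing both sides of (\ref{H-ss}) at a single time $t$ for $X=S_{p}$ gives
\begin{equation*}
S_{p}(\mu c^{H-1};ct)\overset{d}{\sim}\mathrm{Tw}_{p}(\mu c^{H}t,c^{p-1}t^{p-1})\quad\text{and}\quad c^{H}S_{p}(\mu;t)\overset{d}{\sim}\mathrm{Tw}_{p}(c^{H}\mu t,c^{H(p-2)}t^{p-1}),
\end{equation*}
where the second expression uses (\ref{Tw}) with scaling factor $c^{H}$. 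These marginals agree precisely when $p-1=H(p-2)$, i.e.\ when $H=(p-1)/(p-2)=1/\alpha$; the exclusion $p\neq 2$ is exactly what makes this algebra non-degenerate. Finite-dimensional distributions then match because both sides, as functions of $t$, define L\'{e}vy processes with the same marginals, hence with the same independent stationary-increment structure.

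The main obstacle, in my view, is not the algebra but keeping the boundary conventions straight: the convention $\alpha=-\infty\leftrightarrow H=0$ corresponding to the Poisson case $p=1$, the exclusion of $p=2$ (which forces $\alpha=0$ and hence $H\notin\mathbb{R}$), and the case $H=1$ (which corresponds to $\alpha=1$, $p=\infty$, and is handled separately in Section~\ref{twinfty} via the exponential variance function rather than a power one). Once these edge cases are correctly set aside, the equivalence reduces to a one-line functional-equation argument.
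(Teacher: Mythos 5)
Your proposal is correct and follows essentially the same route as the paper: the sufficiency direction reduces to one-dimensional marginals via independent stationary increments and matches the two sides of (\ref{H-ss}) through the Tweedie scaling (\ref{Tw}) and (\ref{average}) using the identity $H(p-2)=p-1$, while the necessity direction equates $tV(\mu)$ with $t^{2H}V(\mu t^{1-H})$ and solves the resulting functional equation at $\mu=1$ to obtain the power $(1-2H)/(1-H)=p$. Your extra care with the boundary conventions ($p=1$, $p=2$, $H=1$) is a welcome clarification of points the paper leaves implicit, but it does not change the argument.
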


Note that the value $H=1/\alpha $ is in agreement with the condition for
uncorrelated increments of Lemma \ref{H-sign}, Item \ref{Three}.

\begin{proof}
Let us first show that the Hougaard L\'{e}vy process $S_{p}(\mu ;t)$ is $%
1/\alpha $-SS. Since s Hougaard L\'{e}vy process has stationary and
independent increments, it suffices to consider the marginal distribution of 
$S_{p}(\mu ;t)$, so that we must show%
\begin{equation*}
S_{p}(\mu c^{H-1};ct)\overset{d}{\sim }c^{H}S_{p}(\mu ;t)\text{ for }t>0
\end{equation*}%
with $H=1/\alpha $. Using (\ref{average}), the left-hand side has marginal
distribution%
\begin{equation*}
S_{p}(\mu c^{H-1};ct)\overset{d}{\sim }\mathrm{Tw}_{p}(\mu tc^{H},\left(
ct\right) ^{p-1})\text{.}
\end{equation*}%
The scaling property (\ref{Tw}) yields the following marginal distribution
for the right-hand side, 
\begin{eqnarray*}
c^{H}S_{p}(\mu ;t) &\overset{d}{\sim }&c^{H}\mathrm{Tw}_{p}(\mu t,t^{p-1}) \\
&=&\mathrm{Tw}_{p}(\mu tc^{H},c^{H\left( p-2\right) }t^{p-1}) \\
&=&\mathrm{Tw}_{p}\left( \mu tc^{H},\left( ct\right) ^{p-1}\right) \text{,}
\end{eqnarray*}%
where we have used the fact that $H\left( p-2\right) =p-1$, which follows
from (\ref{alpha}) and (\ref{H-def}). This implies that $S_{p}(\mu ;t)$ is $%
H $-SSSI with $H$ given by (\ref{H-def}).

To show the reverse implication, let us assume that the exponential family
of L\'{e}vy processes $X$ is self-similar with Hurst exponent $H=1/\alpha $.
By (\ref{Houvar}) the variance of the process is $\mathrm{Var}\left[ X(\mu
;t)\right] =tV(\mu )$, and comparing with (\ref{variance}), we obtain the
equation%
\begin{equation}
tV(\mu )=t^{2H}V(\mu t^{1-H})\text{.}  \label{power}
\end{equation}%
By taking $\mu =1$ in (\ref{power}) and redefining $\mu $ to be $\mu
=t^{1-H} $, we obtain the following solution:%
\begin{equation*}
V(\mu )=V(1)\mu ^{\left( 1-2H\right) /\left( 1-H\right) }\text{,}
\end{equation*}%
and using (\ref{H-def}) and (\ref{alpha}) we find the power to be $\left(
1-2H\right) /\left( 1-H\right) =p$. Hence $V$ is a power variance function
with power parameter $p$ and $\sigma ^{2}=V(1)$.
\end{proof}

\section{Fractional Hougaard motion\label{fracsystems}}

We shall now consider stochastic integration with respect to a L\'{e}vy
process, which in turn will be used to construct a class of fractional
Hougaard motions as moving averages of Hougaard L\'{e}vy processes. Some of
the results in the following are parallel to results of 
%TCIMACRO{\TeXButton{citet Marquardt2006}{\citet{Marquardt2006}}}%
%BeginExpansion
\citet{Marquardt2006}%
%EndExpansion
, but unlike Marquardt, who considers L\'{e}vy processes with zero mean,
finite variance, and no Brownian component, we shall consider stochastic
integration with respect to an arbitrary L\'{e}vy process.

\subsection{Stochastic integration with respect to a L\'{e}vy process}

We now consider stochastic integration with respect to a L\'{e}vy process $X$%
. We follow the approach of \cite{Barndorff-Nielsen2006}, which has the
advantage that we work directly with the cumulant transform (log
characteristic function) and the cumulants, although there are more general
approaches to stochastic integration with respect to a L\'{e}vy process
available, see e.g. \cite{Rajput1989}.

We shall define the stochastic integral%
\begin{equation}
\int_{a}^{b}f(u)\,dX(u),  \label{stint}
\end{equation}%
for a non-random real function $f$. Initially we consider the case where $%
0\leq a<b<\infty $, after which we extend the integration interval to the
whole real line. We denote the cumulant transform of an infinitely divisible
random variable $Y$ by 
\begin{equation*}
C(z;Y)=\log \mathbb{E}\left[ e^{izY}\right] \text{.}
\end{equation*}%
Due to the infinite divisibility, the characteristic function $\mathbb{E}%
\left[ e^{izY}\right] $ has no zeroes, and hence $C$ is well-defined by
means of the principal branch of the complex logarithm. Recall that 
\begin{equation}
\mathbb{E}\left[ Y\right] =-i\dot{C}(0;Y)\text{ and }\mathrm{Var}\left[ Y%
\right] =-\ddot{C}(0;Y)\text{,}  \label{meanvar}
\end{equation}%
provided that $C$ is twice differentiable at zero, where $\dot{C}(z;Y)$ and $%
\ddot{C}(z;Y)$ denote the first and second derivatives of $C$, respectively,
with respect to $z$.

\begin{lemma}
\label{Lemma}Assume that $0\leq a<b<\infty $. Let $X$ be a L\'{e}vy process,
and let the function $f:[a,b]\rightarrow \mathbb{R}$ be continuous. Then the
stochastic integral (\ref{stint}) exists in the limit, in probability, of
approximating Riemann sums. The distribution of the random variable (\ref%
{stint}) is infinitely divisible, and has cumulant transform given by 
\begin{equation}
C\left( z;\int_{a}^{b}f(u)\,dX(u)\right) =\int_{a}^{b}C\left(
zf(u);X(1)\right) \,du\text{ for }z\in \mathbb{R}\text{.}  \label{cumcum}
\end{equation}%
If $X(t)$ has finite second moments, the mean and variance of the random
variable (\ref{stint}) are 
\begin{eqnarray}
\mathbb{E}\left[ \int_{a}^{b}f(u)\,dX(u)\right] &=&\mathbb{E}\left[ X(1)%
\right] \int_{a}^{b}f(u)\,du\text{,}  \label{Xmean} \\
\mathrm{Var}\left[ \int_{a}^{b}f(u)\,dX(u)\right] &=&\mathrm{Var}\left[ X(1)%
\right] \int_{a}^{b}f^{2}(u)\,du\text{.}  \label{Xvar}
\end{eqnarray}
\end{lemma}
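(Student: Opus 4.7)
The plan is to construct the integral as a limit of Riemann sums and to identify its law by computing the cumulant transforms of the approximating sums explicitly, then passing to the limit.

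First I would fix a partition $\pi_n: a = u_0^{(n)} < u_1^{(n)} < \cdots < u_n^{(n)} = b$ with mesh $\|\pi_n\| \to 0$, pick tags $u_j^{*}\in [u_{j-1}^{(n)}, u_j^{(n)}]$, and form the Riemann sum
\begin{equation*}
S_n \;=\; \sum_{j=1}^n f(u_j^{*})\bigl[X(u_j^{(n)}) - X(u_{j-1}^{(n)})\bigr].
\end{equation*}
Because $X$ is a L\'evy process, the increments are independent and each $X(u_j^{(n)}) - X(u_{j-1}^{(n)})$ has the same law as $X(u_j^{(n)} - u_{j-1}^{(n)})$, whose cumulant transform is $(u_j^{(n)} - u_{j-1}^{(n)})\,C(\cdot;X(1))$. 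Independence and the scaling rule $C(z;\lambda Y)=C(\lambda z;Y)$ then give
\begin{equation*}
C(z;S_n) \;=\; \sum_{j=1}^n (u_j^{(n)} - u_{j-1}^{(n)})\,C\bigl(zf(u_j^{*});X(1)\bigr),
\end{equation*}
which is itself a Riemann sum for $\int_a^b C(zf(u);X(1))\,du$.

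Next I would pass to the limit. The map $\theta\mapsto C(\theta;X(1))$ is continuous (continuity of characteristic functions, plus continuity of the principal branch of $\log$ away from the zero-free region guaranteed by infinite divisibility), so $u\mapsto C(zf(u);X(1))$ is continuous on $[a,b]$ for each fixed $z$, and the Riemann sums above converge to the claimed integral. Since this limit is continuous at $z=0$ (it vanishes there), L\'evy's continuity theorem yields convergence in distribution of $S_n$ to a random variable $I$ whose cumulant transform is~\eqref{cumcum}. Infinite divisibility of $I$ is immediate: each $S_n$ is a finite sum of independent infinitely divisible variables, hence infinitely divisible, and this class is closed under weak limits.

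To upgrade convergence in distribution to convergence in probability (the statement the lemma makes), I would work with a refining sequence of partitions and show the sums form a Cauchy sequence in probability. The cleanest route is to note that for a \emph{common} refinement of two partitions $\pi$ and $\pi'$, the difference $S_\pi - S_{\pi'}$ is again a sum of independent infinitely divisible increments whose cumulant transform, by the same computation, converges to $0$ uniformly on compact sets in $z$ as $\|\pi\|,\|\pi'\|\to 0$; Levy continuity then gives $S_\pi - S_{\pi'}\to 0$ in distribution, hence in probability since the limit is constant. A standard Cauchy-in-probability argument produces a well-defined limiting random variable, and independence of the limit from the choice of partition sequence follows by interleaving two sequences. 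Finally, when $X(1)$ has finite second moments, differentiating \eqref{cumcum} at $z=0$ under the integral (justified by the dominated convergence, using local boundedness of $f$ and $\dot C$, $\ddot C$ near $0$) and applying \eqref{meanvar} yields \eqref{Xmean} and \eqref{Xvar}.

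The main obstacle is the upgrade from convergence in distribution to convergence in probability and the associated identification of a canonical limiting random variable; once that is handled via refining partitions, everything else is a direct characteristic-function calculation.
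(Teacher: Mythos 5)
Your proof is correct, but it takes a different route from the paper in the sense that the paper does not actually prove the existence of the integral or the cumulant formula (\ref{cumcum}) at all: it simply invokes Lemma 2.5 of Barndorff-Nielsen and co-authors for those two claims and then obtains (\ref{Xmean}) and (\ref{Xvar}) by differentiating (\ref{cumcum}) at $z=0$ and applying (\ref{meanvar}), exactly as you do in your final step. What you supply is, in effect, the standard self-contained argument underlying the cited lemma: the computation $C(z;S_n)=\sum_j (u_j^{(n)}-u_{j-1}^{(n)})\,C(zf(u_j^{*});X(1))$ via independence and stationarity of increments, the identification of this as a Riemann sum for the right-hand side of (\ref{cumcum}) using continuity of $f$ and of the distinguished logarithm, L\'{e}vy continuity for convergence in law, closure of the infinitely divisible class under weak limits, and the common-refinement Cauchy-in-probability argument to upgrade weak convergence to the convergence in probability that the lemma actually asserts. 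That last upgrade is the genuinely nontrivial part and you handle it correctly (the difference of two tagged sums over the common refinement has cumulant transform bounded by $(b-a)\sup_{|\theta|\le |z|\varepsilon}|C(\theta;X(1))|$ with $\varepsilon$ controlled by the uniform continuity of $f$, so it tends to a point mass at zero). One small point worth tightening if you write this up: continuity of the limit $z\mapsto\int_a^b C(zf(u);X(1))\,du$ at $z=0$ follows not merely because it vanishes there but because $|C(zf(u);X(1))|$ is bounded uniformly in $u\in[a,b]$ by $\sup_{|\theta|\le|z|\,\|f\|_\infty}|C(\theta;X(1))|\to 0$; this is implicit in your argument but should be said. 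Your approach buys a self-contained proof at the cost of length; the paper's buys brevity at the cost of depending on the external reference.
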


\begin{proof}
The stochastic integral (\ref{stint}), and the result (\ref{cumcum}) follow
from Lemma 2.5 of\newline
%TCIMACRO{\TeXButton{Barndorff-Nielsen2006}{\citet{Barndorff-Nielsen2006}}}%
%BeginExpansion
\citet{Barndorff-Nielsen2006}%
%EndExpansion
. By differentiating (\ref{cumcum}) and putting $z=0$ we obtain 
\begin{eqnarray*}
\mathbb{E}\left[ \int_{a}^{b}f(u)\,dX(u)\right]  &=&-i\int_{a}^{b}f(u)\dot{C}%
\left( 0;X(1)\right) \,du \\
&=&\mathbb{E}\left[ X(1)\right] \int_{a}^{b}f(u)\,du\text{,}
\end{eqnarray*}%
where we have used the fact that $-i\dot{C}(0;X(1))=\mathrm{E}\left[ X(1)%
\right] $. Differentiating (\ref{cumcum}) twice and putting $z=0$ we obtain%
\begin{eqnarray*}
\mathrm{Var}\left[ \int_{a}^{b}f(u)\,dX(u)\right]  &=&-\int_{a}^{b}f^{2}(u)%
\ddot{C}\left( 0;X(1)\right) \,du \\
&=&\mathrm{Var}\left[ X(1)\right] \int_{a}^{b}f^{2}(u)\,du\text{,}
\end{eqnarray*}%
where we have used the fact that $-\ddot{C}(0;X(1))=\mathrm{Var}\left[ X(1)%
\right] $. This shows the last two results. In particular, the two integrals
on the right-hand side of (\ref{Xmean}) and (\ref{Xvar}) are finite, each
being the integral of a continuous function on a compact interval.
\end{proof}

We shall now extend the integration interval to the whole real line. To this
end we first need to extend the time domain of the process $X$ to $\mathbb{R}
$. Since we are dealing with a L\'{e}vy process, we have that $X(0)\equiv 0$%
. We then extend the process to the negative half-axis by assuming 
\begin{equation}
-X(-t)=X(0)-X(-t)\overset{d}{=}X(t)\text{ for }t\geq 0\text{,}
\label{minust}
\end{equation}%
cf. \cite{Taqqu2003}.

\begin{proposition}
\label{Proposition}Assume that $-\infty \leq a<b\leq \infty $. Let $X(t)$ be
a L\'{e}vy process on $\mathbb{R}$, and let $f:(a,b)\rightarrow \mathbb{R}$
be continuous on $(a,b)\setminus \left\{ 0\right\} $. Assume that 
\begin{equation}
\forall z\in \mathbb{R}:\int_{a}^{b}\left\vert C\left( zf(u);X(1)\right)
\right\vert \,du<\infty \text{.}  \label{inteC}
\end{equation}%
Then the stochastic integral (\ref{stint}) exists in the limit, in
probability, of the sequence 
\begin{equation*}
\left\{ \int_{a_{n}}^{b_{n}}f(u)\,dX(u)\right\} _{n\in \mathbb{N}},
\end{equation*}%
where $a_{n}$ and $b_{n}$ are arbitrary sequences in $(a,b)$ such that $%
a_{n}\leq b_{n}$ for all $n$ and $a_{n}\downarrow a$ and $b_{n}\uparrow b$
as $n\rightarrow \infty $. The distribution of the random variable (\ref%
{stint}) is infinitely divisible, and has cumulant transform given by (\ref%
{cumcum}). If $X(t)$ has finite second moments, then the mean and variance
of the random variable (\ref{stint}) are given by (\ref{Xmean}) and (\ref%
{Xvar}), respectively.
\end{proposition}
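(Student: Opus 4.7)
My plan is to define the stochastic integral over $(a,b)$ as the in-probability limit of the truncated integrals $Y_n := \int_{a_n}^{b_n} f(u)\,dX(u)$, each of which is well-defined by Lemma~\ref{Lemma} (after splitting at $u=0$ if $0 \in (a_n,b_n)$, since $f$ is only assumed continuous on $(a,b)\setminus\{0\}$). Integrals over intervals in the negative half-axis are interpreted via the symmetric extension (\ref{minust}) of $X$ to $\mathbb{R}$, which preserves the stationary independent increments structure, so the conclusions of Lemma~\ref{Lemma} continue to hold for any sub-interval $[a_n,b_n] \subset (a,b)$. In particular each $Y_n$ is infinitely divisible with
\begin{equation*}
C(z;Y_n) = \int_{a_n}^{b_n} C(zf(u);X(1))\,du.
\end{equation*}

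I would then take limits at the level of cumulant transforms. Since $|1_{[a_n,b_n]}(u) C(zf(u);X(1))| \leq |C(zf(u);X(1))|$ and the latter is integrable by hypothesis (\ref{inteC}), dominated convergence gives $C(z;Y_n) \to \int_a^b C(zf(u);X(1))\,du$ for every $z \in \mathbb{R}$. The limit function is continuous at $z=0$ (by another application of dominated convergence, using $C(0;X(1))=0$), so L\'{e}vy's continuity theorem yields weak convergence of $Y_n$ to some random variable $Y$ whose cumulant transform is (\ref{cumcum}); as a weak limit of infinitely divisible laws, the distribution of $Y$ is infinitely divisible.

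To upgrade weak convergence to convergence in probability, I would exploit the independent-increments structure of $X$. After reducing (by interleaving) to a nested exhausting sequence $[a_n,b_n] \nearrow (a,b)$, the successive differences
\begin{equation*}
Y_{n+1}-Y_n = \int_{a_{n+1}}^{a_n} f(u)\,dX(u) + \int_{b_n}^{b_{n+1}} f(u)\,dX(u)
\end{equation*}
involve $X$ on intervals disjoint from $[a_n,b_n]$, so $(Y_n)$ is a sequence of partial sums of independent random variables. By the classical theorem of L\'{e}vy for such sums, weak convergence implies almost sure convergence, hence convergence in probability. Independence of the limit from the choice of $(a_n,b_n)$ follows by interleaving any two exhausting sequences into a common one, of which both are subsequences.

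For the mean and variance under finite second moments of $X(1)$, I would apply Lemma~\ref{Lemma} to each $Y_n$ and let $n \to \infty$. The expansion $C(\zeta;X(1)) = i\zeta\mathbb{E}[X(1)] - \tfrac12 \zeta^2 \mathrm{Var}[X(1)] + o(\zeta^2)$ near zero combined with (\ref{inteC}) forces $f \in L^1((a,b)) \cap L^2((a,b))$, so the right-hand sides of (\ref{Xmean})--(\ref{Xvar}) converge. The anticipated main obstacle is justifying the passage of moments to the limit: equivalently, differentiating $\int_a^b C(zf(u);X(1))\,du$ twice under the integral at $z=0$. This requires a domination argument for $\partial_z C(zf(u);X(1))$ and $\partial_z^2 C(zf(u);X(1))$ uniformly in a neighborhood of zero, which I expect to follow from the L\'{e}vy--Khintchine representation of $C(\cdot;X(1))$ together with a pointwise bound by a constant multiple of $|f(u)| + f(u)^2$.
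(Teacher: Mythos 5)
Your argument is essentially correct, but it takes a different route from the paper: where you reconstruct the limit theorem from first principles, the paper simply reduces to the half-line case $0\leq a<b\leq\infty$ and cites Proposition 2.6 of \citet{Barndorff-Nielsen2006} for the existence of the integral and the formula (\ref{cumcum}), then handles $-\infty\leq a<b\leq 0$ by the substitution $v=-u$ together with the reflection (\ref{minust}), and splits at $0$ when $a<0<b$; the moment formulas are dispatched as ``elementary arguments'' from Lemma \ref{Lemma}. Your chain --- dominated convergence on cumulant transforms via (\ref{inteC}), L\'{e}vy's continuity theorem, and the upgrade from weak convergence to convergence in probability via L\'{e}vy's equivalence theorem for partial sums of independent random variables (the nesting of the $[a_n,b_n]$ is automatic from the monotonicity of $a_n$ and $b_n$, so no interleaving is needed there) --- is precisely the content of the cited proposition, so your proof is more self-contained at the cost of length. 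Two points deserve more care than you give them. First, your appeal to Lemma \ref{Lemma} ``after splitting at $u=0$'' is not quite licit, since the lemma requires $f$ continuous on the \emph{closed} interval and $f$ may be discontinuous (even unbounded) at $0$; you should treat $0^{\pm}$ as additional limiting endpoints, exactly as $a$ and $b$ are treated, or invoke the open-interval version directly. Second, the continuity at $z=0$ of the limit cumulant transform does not follow from dominated convergence as stated: hypothesis (\ref{inteC}) gives integrability of $u\mapsto C(zf(u);X(1))$ for each \emph{fixed} $z$, not a dominating function uniform over $z$ in a neighbourhood of $0$; one needs a genuine domination argument from the L\'{e}vy--Khintchine representation (of the same kind you correctly anticipate for the moment computation). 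Neither issue is fatal, but both are exactly the technicalities the paper avoids by citation.
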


\begin{proof}
First assume that $0\leq a<b\leq \infty $. In this case, the stochastic
integral and the result (\ref{cumcum}) follow from Proposition 2.6 of 
%TCIMACRO{\TeXButton{Barndorff-Nielsen2006}{\citet{Barndorff-Nielsen2006}}}%
%BeginExpansion
\citet{Barndorff-Nielsen2006}%
%EndExpansion
. In the case $-\infty \leq a<b\leq 0$ we make the substitution $v=-u$, and
rewrite the integral as follows: 
\begin{equation}
\int_{a}^{b}f(u)\,dX(u)=\int_{-a}^{-b}f(-v)\,dX(-v)=\int_{-b}^{-a}f(-v)\,d 
\left[ -X(-v)\right] \text{.}  \label{integrals}
\end{equation}%
In view of (\ref{minust}), the process $\left\{ -X(-t):t\geq 0\right\} $ is
a L\'{e}vy process on the positive half-line. Since $0\leq -b<-a\leq \infty $%
, the right-most integral of (\ref{integrals}) is defined according to the
first case just considered, thereby providing the required definition of the
left-most integral of (\ref{integrals}). In the case where $a$ is negative
and $b$ is positive, we split the integral in two parts, 
\begin{equation*}
\int_{a}^{b}f(u)\,dX(u)=\int_{a}^{0}f(u)\,dX(u)+\int_{0}^{b}f(u)\,dX(u)\text{%
,}
\end{equation*}%
and use the fact that both integrals on the right-hand side of the equation
are now properly defined. The remainder of the results follows from Lemma %
\ref{Lemma} by elementary arguments.
\end{proof}

\subsection{Fractional Hougaard motion\label{secFrac}}

We shall now define a class of fractional Hougaard motions by means of
stochastic integration with respect to a Hougaard L\'{e}vy process. This
approach is similar to the moving average representation of fractional
Brownian motion, where the integration is with respect to ordinary Brownian
motion \citep{Mandelbrot1968a}. Fractional Hougaard motions provide our main
examples of non-L\'{e}vy self-similar families of stochastic processes.

Following 
%TCIMACRO{\TeXButton{Beran1994}{\citet[pp.~56--59]{Beran1994}} }%
%BeginExpansion
\citet[pp.~56--59]{Beran1994}
%EndExpansion
and 
%TCIMACRO{\TeXButton{Taqqu 2003}{\citet{Taqqu2003}}}%
%BeginExpansion
\citet{Taqqu2003}%
%EndExpansion
, we define the weight function $w_{h}(t,u)$ for $t\geq 0$ and $u\in \mathbb{%
R}$ by 
\begin{equation}
w_{h}(t,u)=\left\{ 
\begin{array}{ccc}
(t-u)^{h}-(-u)^{h} & \text{ for } & u<0 \\ 
(t-u)^{h} & \text{ for } & 0\leq u<t \\ 
0 & \text{ for } & t\leq u\text{,}%
\end{array}%
\right.  \label{tu}
\end{equation}%
where $h$ is a real constant. Using the notation $u_{+}=\max \left\{
0,u\right\} $ we may also express the weight function in the more compact
form%
\begin{equation}
w_{h}(t,u)=(t-u)_{+}^{h}-(-u)_{+}^{h}\text{.}  \label{plus}
\end{equation}

For a given $p\in \Delta $, we let $\left\{ S_{p}(\mu ;t):t\in \mathbb{R}%
\right\} $ denote the Hougaard L\'{e}vy process defined in Section~\ref%
{ExHou}, extended to the whole real line by means of (\ref{minust}). For
simplicity we take $\sigma ^{2}=1,$ and we recall from (\ref{alpha}) that $%
\alpha =1+(1-p)^{-1}$.

\begin{proposition}
\label{fracHou}Assume that $p\in \Delta \setminus \left\{ 1,2\right\} $ and $%
H<1/\alpha $, and define the \emph{fractional Hougaard motion} $S_{p,H}(\mu
;t)$ by the stochastic integral 
\begin{equation}
S_{p,H}(\mu ;t)=\int_{-\infty }^{\infty }w_{h}(t,u)\,dS_{p}(\mu ;u)\text{
for }t\geq 0\text{,}  \label{yphh}
\end{equation}%
where $h=H-1/\alpha $ and $\mu \in \overline{\Omega }_{p}$. Then the family $%
S_{p,H}(\mu ;t)$ is $H$-SSSI with cumulant transform 
\begin{equation}
C(z;S_{p,H}(\mu ;t))=\frac{\alpha -1}{\alpha }\mu ^{\alpha /(\alpha
-1)}\int_{-\infty }^{\infty }\left\{ \left[ 1+\frac{izw_{h}(t,u)}{(\alpha
-1)\mu ^{1/(\alpha -1)}}\right] ^{\alpha }-1\right\} \,du\text{ for }z\in 
\mathbb{R}\text{.}  \label{ctransform}
\end{equation}%
For $1/\alpha -1<H<1/\alpha $, the process $S_{p,H}(\mu ;t)$ has mean zero,
and for $1/\alpha -1/2<H<1/\alpha $, the process has variance%
\begin{equation}
\mathrm{Var}\left[ S_{p,H}(\mu ;t)\right] =\mu ^{p}t^{1+2\left( H-1/\alpha
\right) }\int_{-\infty }^{\infty }w_{h}^{2}(1,v)\,dv\text{.}  \label{VarTw}
\end{equation}
\end{proposition}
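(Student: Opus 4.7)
The plan is to verify, in order, four tasks: (a) existence of the stochastic integral (\ref{yphh}); (b) the cumulant transform formula (\ref{ctransform}); (c) stationary increments and $H$-self-similarity; (d) the mean and variance claims. Tasks (a) and (b) will be handled together via Proposition \ref{Proposition}, task (c) by direct manipulation of the integral using Theorem \ref{Hougaard}, and task (d) via Lemma \ref{Lemma}.

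For (a) and (b), I would apply Proposition \ref{Proposition} with driving L\'{e}vy process $S_p(\mu;\cdot)$ and integrand $w_h(t,\cdot)$. The weight is continuous on $\mathbb{R}\setminus\{0,t\}$; vanishes for $u\geq t$; has potential integrable singularities near $u=0$ and $u=t$ controlled by $h>-1$; and, by binomial expansion, decays like $ht(-u)^{h-1}$ as $u\to -\infty$. Combined with the fact that the Tweedie cumulant transform $C(w;S_p(\mu;1))=\tfrac{\alpha-1}{\alpha}\mu^{\alpha/(\alpha-1)}\{[1+iw/((\alpha-1)\mu^{1/(\alpha-1)})]^{\alpha}-1\}$ grows linearly in $w$ near zero and of order $|w|^{\alpha}$ at infinity, the integrability hypothesis (\ref{inteC}) is verified under $H<1/\alpha$. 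Substituting this Tweedie cumulant transform into (\ref{cumcum}) then produces (\ref{ctransform}) immediately.

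For (c), stationary increments come from the shift $v=u-s$ in $S_{p,H}(\mu;s+t)-S_{p,H}(\mu;s)=\int[w_h(s+t,u)-w_h(s,u)]\,dS_p(\mu;u)$, which reduces the integrand to $w_h(t,v)$ and identifies the new driving process with $\{S_p(\mu;v+s)-S_p(\mu;s)\}_{v}\overset{d}{=}\{S_p(\mu;v)\}_{v}$; since $w_h(0,\cdot)\equiv 0$ we have $S_{p,H}(\mu;0)=0$. For the $H$-SSSI identity I would substitute $u=cv$ in the integral defining $S_{p,H}(\mu c^{H-1};ct)$, extracting the factor $w_h(ct,cv)=c^{h}w_h(t,v)$; then invoke the self-similarity of the Hougaard L\'{e}vy process (Theorem \ref{Hougaard}, Hurst exponent $1/\alpha$) to re-express the driving process, producing an additional factor $c^{1/\alpha}$. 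The two factors combine to give $c^{h+1/\alpha}=c^{H}$, as required.

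For (d), apply (\ref{Xmean}) and (\ref{Xvar}) with $\mathbb{E}[S_p(\mu;1)]=\mu$ and $\mathrm{Var}[S_p(\mu;1)]=\mu^{p}$ to obtain $\mathbb{E}[S_{p,H}(\mu;t)]=\mu\int w_h(t,u)\,du$ and $\mathrm{Var}[S_{p,H}(\mu;t)]=\mu^{p}\int w_h(t,u)^{2}\,du$. For $h\in(-1,0)$ (i.e.\ $1/\alpha-1<H<1/\alpha$), split the mean integral at $u=0$, use the antiderivative $(t-u)^{h+1}/(h+1)$ on $(0,t)$ and its counterpart $-(t+v)^{h+1}/(h+1)$ on $(0,\infty)$ (with $v=-u$), and note $(t+v)^{h+1}-v^{h+1}\to 0$ as $v\to\infty$; the two pieces cancel and the mean vanishes. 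For the variance, substitute $u=tv$ to extract $t^{2h+1}=t^{1+2(H-1/\alpha)}$, yielding (\ref{VarTw}); finiteness of $\int w_h(1,v)^{2}\,dv$ requires $2h>-1$, i.e.\ $H>1/\alpha-1/2$. The principal technical difficulty is the integrability check (\ref{inteC}) uniformly in $z\in\mathbb{R}$: because $C(w;S_p(\mu;1))$ transitions from linear growth near zero to $|w|^{\alpha}$ growth at infinity, and because the sign of $\alpha$ differs across the regimes $p<0$, $1<p<2$, and $p>2$, the controls at $u=0$ and $u\to-\infty$ must be handled case by case; once this is in place, the remaining steps are mechanical.
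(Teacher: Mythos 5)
Your proposal is correct and follows essentially the same route as the paper's proof: existence and the cumulant transform via Proposition \ref{Proposition} applied to the Tweedie cumulant function $\kappa_{\alpha}$, stationary increments by the shift substitution, $H$-self-similarity by combining the scaling $w_{h}(ct,u)=c^{h}w_{h}(t,uc^{-1})$ with the $1/\alpha$-self-similarity of $S_{p}$ from Theorem \ref{Hougaard}, and the moments from (\ref{Xmean})--(\ref{Xvar}) with the substitution $u=tv$. The only detail to tighten is that the integrability at the singularities $u=0$ and $u=t$ is governed by $\alpha h>-1$ (since $C(w;S_{p}(\mu;1))$ grows like $|w|^{\alpha}$ for large $|w|$), not by $h>-1$; otherwise your explicit computation of $\int w_{h}(t,u)\,du=0$ is a self-contained replacement for the paper's citation of the corresponding formula in Beran.
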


\begin{proof}
Our method of proof is inspired by 
%TCIMACRO{\TeXButton{Beran 1994}{\citet[pp.~56--59]{Beran1994}}}%
%BeginExpansion
\citet[pp.~56--59]{Beran1994}%
%EndExpansion
, and we shall hence show self-similarity by a direct argument, rather than
via the covariance function, say, as is usually done for fractional Brownian
motion. We first need to check that the integral (\ref{inteC}) is finite for
the function $f(u)=w_{h}(t,u)$. Let us define the complex analytic function $%
\kappa _{\alpha }$ by 
\begin{equation}
\kappa _{\alpha }(x)=\frac{\alpha -1}{\alpha }\left( \frac{x}{\alpha -1}%
\right) ^{\alpha }\text{ for }\func{Re}\frac{x}{\alpha -1}>0  \label{kalpha}
\end{equation}%
%TCIMACRO{\TeXButton{citep Jorgensen1997}{\citep[p.~131]{Jorgensen1997}}}%
%BeginExpansion
\citep[p.~131]{Jorgensen1997}%
%EndExpansion
, where the domain may be extended to the imaginary axis when $\alpha >0$,
and to the whole complex plane when $\alpha =2$. Then $S_{p}(\mu ;1)$ has
cumulant transform%
\begin{equation*}
C\left( z;S_{p}(\mu ;1)\right) =\kappa _{\alpha }(\theta +iz)-\kappa
_{\alpha }(\theta )=\kappa _{\alpha }(\theta )\left[ \left( 1+\frac{iz}{%
\theta }\right) ^{\alpha }-1\right] \text{,}
\end{equation*}%
where $\theta =(\alpha -1)\mu ^{1/(\alpha -1)}$. The integral (\ref{inteC})
hence takes the form 
\begin{equation*}
\int_{-\infty }^{\infty }\left\vert C\left( zw_{h}(t,u);S_{p}(\mu ;1)\right)
\right\vert \,du=\int_{-\infty }^{\infty }\left\vert \kappa _{\alpha
}(\theta +izw_{h}(t,u))-\kappa _{\alpha }(\theta )\right\vert \,du.
\end{equation*}%
In the limit $u\rightarrow -\infty $, a Taylor expansion of $\kappa _{\alpha
}$ yields 
\begin{equation*}
\left\vert \kappa _{\alpha }(\theta +izw_{h}(t,u))-\kappa _{\alpha }(\theta
)\right\vert =O\left( \left\vert w_{h}(t,u)\right\vert \right) \text{,}
\end{equation*}%
which implies integrability in this limit, due the assumption that $%
h=H-1/\alpha <0$. For $u$ near zero or $t$, the integrability of $\left\vert
\kappa _{\alpha }(\theta +izw_{h}(t,u))-\kappa _{\alpha }(\theta
)\right\vert =O\left( \left\vert w_{h}(t,u)\right\vert ^{\alpha }\right) $
requires $\alpha h=\alpha H-1>-1$, or equivalently $H\alpha >0$, which is
satisfied because $\alpha \leq 2$ and $H<1/\alpha $. By (\ref{cumcum}) we
find that the process $S_{p,H}(t)$ has cumulant transform 
\begin{eqnarray*}
C(z;S_{p,H}(\mu ;t)) &=&\int_{-\infty }^{\infty }C\left(
zw_{h}(t,u);S_{p}(\mu ;1)\right) \,du \\
&=&\int_{-\infty }^{\infty }\left[ \kappa _{\alpha }(\theta
+izw_{h}(t,u))-\kappa _{\alpha }(\theta )\right] \,du \\
&=&\kappa _{\alpha }(\theta )\int_{-\infty }^{\infty }\left\{ \left[ 1+\frac{%
iz}{\theta }w_{h}(t,u)\right] ^{\alpha }-1\right\} \,du\text{,}
\end{eqnarray*}%
which implies (\ref{ctransform}), in view of the above definition of $\theta 
$.

Now let us calculate the mean and variance of $S_{p,H}(\mu ;t)$ using (\ref%
{Xmean}) and (\ref{Xvar}). Using well-known results about the integral of $%
w_{h}$ 
%TCIMACRO{%
%\TeXButton{Beran1994}{\citep[e.g.][formula (2.22) p.~59]{Beran1994}} }%
%BeginExpansion
\citep[e.g.][formula (2.22) p.~59]{Beran1994}
%EndExpansion
we obtain that the mean is finite for $1/\alpha -1<H<1/\alpha $ and 
\begin{equation*}
\,\mathbb{E}\left[ S_{p,H}(\mu ;t)\right] =\mathbb{E}\left[ S_{p}(\mu ;1)%
\right] \int_{-\infty }^{\infty }w_{h}(t,u)\,du=0\text{.}
\end{equation*}%
To calculate the variance we use the following scaling relation for the
weight function $w_{h}$,%
\begin{equation}
w_{h}\left( t,u\right) =t^{h}w_{h}(1,ut^{-1})\text{.}  \label{powert}
\end{equation}%
Using the substitution $v=ut^{-1}$, and with $h=H-1/\alpha $, we obtain for $%
1/\alpha -1/2<H<1/\alpha $,%
\begin{eqnarray*}
\mathrm{Var}\left[ S_{p,H}(\mu ;t)\right] &=&\mu ^{p}\int_{-\infty }^{\infty
}w_{h}^{2}(t,u)\,du \\
&=&\mu ^{p}t^{2h}\int_{-\infty }^{\infty }w_{h}^{2}(1,ut^{-1})\,du \\
&=&\mu ^{p}t^{1+2(H-1/\alpha )}\int_{-\infty }^{\infty }w_{h}^{2}(1,v)\,dv%
\text{.}
\end{eqnarray*}

In order to show that the family of fractional Hougaard motions $S_{p,H}(\mu
;t)$ is $H$-SS, we shall use the fact that the Hougaard L\'{e}vy process $%
S_{p}(\mu ;t)$ is $1/\alpha $-SS with $\alpha =1+(1-p)^{-1}$. Consider (\ref%
{yphh}) with arguments $\mu c^{H-1}$ and $ct$. We note that the function $%
w_{h}$ satisfies the following scaling relation similar to (\ref{powert}), 
\begin{equation}
w_{h}(ct,u)=c^{h}w_{h}\left( t,uc^{-1}\right) \text{ for }c>0\text{.}
\label{scaling1}
\end{equation}%
In view of (\ref{scaling1}), we obtain 
\begin{eqnarray*}
S_{p,H}(\mu c^{H-1};ct) &=&\int_{-\infty }^{\infty }w_{h}(ct,u)\,dS_{p}(\mu
c^{H-1};u) \\
&=&c^{h}\int_{-\infty }^{\infty }w_{h}(t,uc^{-1})\,dS_{p}(\mu c^{H-1};u)%
\text{.}
\end{eqnarray*}%
Making the substitution $v=uc^{-1}$, this becomes 
\begin{equation*}
S_{p,H}(\mu c^{H-1};ct)=c^{h}\int_{-\infty }^{\infty }w_{h}(t,v)\,dS_{p}(\mu
c^{H-1};cv)\text{.}
\end{equation*}%
Recalling that $h=H-1/\alpha $, and using the $1/\alpha $-SS property of $%
S_{p}(\mu ;t)$, this implies 
\begin{equation*}
S_{p,H}(\mu c^{H-1};ct)\overset{d}{=}c^{H-1/\alpha }\int_{-\infty }^{\infty
}w_{h}(t,v)c^{1/\alpha }dS_{p}(\mu ;v)=c^{H}S_{p,H}(\mu ;t)\text{.}
\end{equation*}%
Hence, $S_{p,H}(\mu ;t)$ is self-similar with Hurst exponent $H$.

To show that $S_{p,H}(\mu ;t)$ has stationary increments, we observe, using
the expression (\ref{plus}) for $w_{h}$, that for any $s,t>0$,%
\begin{align*}
S_{p,H}(\mu ;t+s)-S_{p,H}(\mu ;t)& =\int_{-\infty }^{\infty }\left\{
w_{h}(t+s,u)-w_{h}(t,u)\right\} \,dS_{p}(\mu ;u) \\
& =\int_{-\infty }^{\infty }\left\{ (t+s-u)_{+}^{h}-(t-u)_{+}^{h}\right\}
\,dS_{p}(\mu ;u) \\
& \overset{d}{=}\int_{-\infty }^{\infty }\left\{
(s-v)_{+}^{h}-(-v)_{+}^{h}\right\} \,dS_{p}(\mu ;v) \\
& =S_{p,H}(\mu ;s)\text{,}
\end{align*}%
where, in the second-last equality, we have used the substitution $v=u-t$.
Since $S_{p,H}(\mu ;0)\equiv 0$, this is the desired result, completing the
proof.
\end{proof}

As already mentioned, fractional Hougaard motion is our main example of
general self-similarity outside of the L\'{e}vy process case. However, the
processes $S_{p,H}(\mu ;s)$ do not have the same marginal distribution as
the Hougaard L\'{e}vy processes of Section~\ref{ExHou}. In particular, for $%
p>1,$ $S_{p,H}(\mu ;s)$ has support on the whole real line rather than on $%
[0,\infty )$ or $(0,\infty )$. This follows from the fact that the parameter 
$h$ is restricted to negative values, which implies that $w_{h}(t,u)$ is
negative for $u<0$ and positive for $0<u<t$, so that $S_{p,H}(\mu ;s)$ has
support on the whole real line.

\section{Lamperti transformations\label{secLampertiTrans}}

We shall now introduce an extension of the \emph{Lamperti transformation }to
the case of general self-similarity. Consider a strictly $H$-SS stochastic
process $X(t)$, and recall that the Lamperti transformation yields a new
strictly stationary stochastic process $\left\{ Y(t):t\in \mathbb{R}\right\} 
$ by means of the following exponential scaling transformation, 
\begin{equation}
Y(t)=e^{-tH}X(e^{t})\text{ for }t\in \mathbb{R}\text{,}  \label{Ordinary}
\end{equation}%
see \cite{Lamperti1962} and 
%TCIMACRO{\TeXButton{Embrechts2002a}{\citet[p.~11]{Embrechts2002a}}}%
%BeginExpansion
\citet[p.~11]{Embrechts2002a}%
%EndExpansion
. Now, let $X$ be a general $H$-SS family, and define the family of
processes $Y(\mu ;t)$ for $\mu \in \overline{\Omega }$ by 
\begin{equation}
Y(\mu ;t)=e^{-tH}X(\mu e^{t(H-1)};e^{t})\text{ for }t\in \mathbb{R}\text{.}
\label{Lamperti}
\end{equation}%
Equation (\ref{Lamperti}) may be obtained from (\ref{fixedpoint}) by taking $%
t=1$ and then substituting $c=e^{t}$. The fact that the right-hand side of (%
\ref{fixedpoint}) does not depend on $c$ suggests that $Y(\mu ;t)$ might be
stationary. By the self-similarity of $X$ we obtain, for each $\mu \in 
\overline{\Omega }$,%
\begin{equation}
Y(\mu ;t)=e^{-tH}X(\mu e^{t(H-1)};e^{t})\overset{d}{\sim }X(e^{-t(H-1)}\mu
e^{t(H-1)};e^{-t}e^{t})=X(\mu ;1)\text{ for }t\in \mathbb{R}\text{.}
\label{OU-Lamperti}
\end{equation}%
This result shows that the marginals of $Y(\mu ;t)$ are stationary as a
function of $t$ for each $\mu $.

Note that in the special case of a process with drift, $X(\mu ;t)=X(t)+\mu t$%
, say, the process (\ref{Lamperti}) takes the form 
\begin{equation}
Y(\mu ;t)=e^{-tH}X(e^{t})+\mu =Y(t)+\mu \text{ for }t\in \mathbb{R}\text{,}
\label{stationary}
\end{equation}%
which is simply the process (\ref{Ordinary}) with a shift $\mu \in \mathbb{R}
$. If we assume that the drift term $\mu t$ is added pathwise to $X(t),$
then the shift $\mu $ is similarly added pathwise to $Y(t)$. In this case, (%
\ref{Lamperti}) is equivalent to the conventional Lamperti transformation.
In the special case where $X$ is a Brownian motion with drift, the
corresponding family $Y$ consists of shifted Ornstein-Uhlenbeck processes,
and (\ref{stationary}) represents a class of generalized Ornstein-Uhlenbeck
processes.

Let us now consider the random field $\left\{ X(\mu ;t):t\geq 0,\mu \in 
\overline{\Omega }\right\} $, and assume that (\ref{fixedpoint}) holds in
the sense of equality of the finite-dimensional distributions of the two
random fields. In this case, the stationarity of the process $Y(\mu ;\cdot )$
may be shown as follows:\ 
\begin{align*}
Y(\mu ;s+t)& =e^{-\left( s+t\right) H}X(\mu e^{\left( s+t\right)
(H-1)};e^{s+t}) \\
& =e^{-tH}e^{-sH}X(\mu e^{s(H-1)}e^{t(H-1)};e^{s}e^{t}) \\
& \overset{d}{=}e^{-tH}X(\mu e^{t(H-1)};e^{t}) \\
& =Y(\mu ;t)\text{ for }t\in \mathbb{R}\text{,}
\end{align*}%
for all $\mu \in \overline{\Omega }$. The stationary processes $Y(\mu ;\cdot
)$ are generalized Ornstein-Uhlenbeck processes.

Let us now turn to the inverse Lamperti transformation. Suppose that $%
\left\{ Y(\mu ;t):\mu \in \overline{\Omega },\;t\in \mathbb{R}\right\} $ is
a family of strictly stationary processes. For a given $H\in \mathbb{R}$, we
define the family $X$ by 
\begin{equation}
X(\mu ;t)=t^{H}Y(\mu t^{1-H};\log t)\text{ for }t>0  \label{invLam}
\end{equation}%
for all $\mu \in \overline{\Omega }$. In the special case where $Y(\mu
;t)=Y(t)+\mu $, namely a stationary process $Y(t)$ plus a shift $\mu \in 
\mathbb{R}$, then $X$ has the form $X(\mu ;t)=t^{H}Y(\log t)+\mu t$, which
is a strictly $H$-SS process plus a drift term.

The discussion of the self-similarity of $X$ is similar to be above
discussion for the ordinary Lamperti transformation (\ref{Lamperti}). Let us
assume that $\left\{ Y(\mu ;t):\mu \in \overline{\Omega },\;t\in \mathbb{R}%
\right\} $ is a $t$-stationary random field, in the sense that 
\begin{equation*}
Y(\mu ;t+s)\overset{d}{=}Y(\mu ;t)\text{ for }\mu \in \overline{\Omega }%
\text{ and}\;t\in \mathbb{R}\text{,}
\end{equation*}%
for all $s\in \mathbb{R}$, where $\overset{d}{=}$ here means equality of the
finite-dimensional distributions of the two random fields. Under this
assumption we obtain, for $X$ defined by (\ref{invLam}), 
\begin{eqnarray*}
c^{H}X(\mu ;t) &=&\left( ct\right) ^{H}Y(\mu t^{1-H};\log t) \\
&=&\left( ct\right) ^{H}Y(\mu c^{H-1}\left( ct\right) ^{1-H};\log t) \\
&\overset{d}{=}&\left( ct\right) ^{H}Y(\mu c^{H-1}\left( ct\right)
^{1-H};\log c+\log t) \\
&=&X(\mu c^{H-1};ct)\text{,}
\end{eqnarray*}%
which shows that the family $X$ is $H$-SS.

\section{Lamperti-type limit theorems\label{secLamperti}}

We shall now consider some generalizations of Lamperti's limit theorem of 
%TCIMACRO{\TeXButton{citep Lamperti1962}{\citep{Lamperti1962}}}%
%BeginExpansion
\citep{Lamperti1962}%
%EndExpansion
, according to which all self-similar stochastic processes appear as
large-sample limits of suitably scaled stochastic processes.

Lamperti's fundamental limit theorem says that if we are given a stochastic
process $X(t)$ and a positive measurable function $A(c)$ satisfying 
\begin{equation}
A(c)\rightarrow \infty \text{ as }c\rightarrow \infty ,  \label{biga}
\end{equation}%
and%
\begin{equation}
A(c)^{-1}X(ct)\overset{d}{\rightarrow }Y(t)\text{ for }t\geq 0\text{,}
\label{ytox}
\end{equation}%
as $c\rightarrow \infty ,$ where the stochastic process $Y(t)$ has
non-degenerate marginal distributions for all $t>0$, then $Y(t)$ is $H$-SS
for some $H>0$, and all $H$-SS processes appear as limits of this form.
Moreover, the function $A$ is regularly varying with index $H$, that is, 
\begin{equation}
A(c)=c^{H}L(c)\text{,}  \label{slow}
\end{equation}%
where $L$ is a slowly varying function. See e.g. %
\citet[p.~14]{Embrechts2002a} for a proof.

Let us first discuss an infinitely divisible asymptotic version of
Lamperti's limit theorem. Hence, let us replace the condition (\ref{biga})
by 
\begin{equation*}
A(c)\rightarrow \infty \text{ as }c\downarrow 0,
\end{equation*}%
and let us assume that (\ref{ytox}) now holds for $c\downarrow 0$. Then a
straightforward modification of Lamperti's proof shows that $Y(t)$ is again $%
H$-SS for some $H>0$, and (\ref{slow}) holds with $L$ slowly varying at zero.

Let us consider the processes with drift corresponding to $X(t)$ and $Y(t)$,
namely $X(\mu ;t)=X(t)+\mu t$ and $Y(\mu ;t)=Y(t)+\mu t$, say. Then (\ref%
{ytox}) implies that%
\begin{align*}
A(c)^{-1}X(\mu A(c)c^{-1};ct)& =A(c)^{-1}X(ct)+\mu t \\
& \overset{d}{\rightarrow }Y(t)+\mu t=Y(\mu ;t)\text{ for }t\geq 0\text{,}
\end{align*}%
as $c\rightarrow \infty $ or $c\downarrow 0$, where $Y(\mu ;t)$ is a general
self-similar family of stochastic processes.

These results motivate the following question. Suppose we are given families
of stochastic processes $X(\mu ;t)$ and $Y(\mu ;t)$, and a function $A(c)$
such that 
\begin{equation}
A(c)^{-1}X(\mu A(c)c^{-1};ct)\overset{d}{\rightarrow }Y(\mu ;t)\text{ for }%
t\geq 0\text{, as }c\rightarrow \infty \text{ or }c\downarrow 0\text{.}
\label{LampertiConvergence}
\end{equation}%
Under which conditions does this imply that the family of stochastic
processes $Y$\ is self-similar? We shall not answer this question in full
generality here, but clearly (\ref{LampertiConvergence}) includes as special
cases the different cases of convergence discussed so far. The following
partial result shows a case where (\ref{LampertiConvergence}) applies to
non-L\'{e}vy processes.

Let us replace $A(c)$ by $c^{H}$ in (\ref{LampertiConvergence}), which
yields the condition 
\begin{equation}
c^{-H}X(\mu c^{H-1};ct)\overset{d}{\rightarrow }Y(\mu ;t)\text{ for }t\geq 0%
\text{, as }c\rightarrow \infty \text{ or }c\downarrow 0\text{,}
\label{powerc}
\end{equation}%
which corresponds to convergence of the action of the renormalization group (%
\ref{group}) to the fixed point (\ref{fixedpoint}). We then have the
following Lamperti-type limit theorem, based on an adaptation of the
original proof, as given by \citet[p.~14]{Embrechts2002a}.

\begin{theorem}
Consider a family of stochastic processes $Y(\mu ;t)$.

\begin{enumerate}
\item If there exists a family of stochastic processes $X(\mu ;t)$ such that
(\ref{powerc}) holds, then the process $Y(\mu ;t)$ is $H$-SS for each $\mu
\in \overline{\Omega }$.

\item If $Y(\mu ;t)$ is $H$-SS then there exists a family $X(\mu ;t)$
satisfying (\ref{powerc}).
\end{enumerate}
\end{theorem}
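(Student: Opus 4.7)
The plan is to reduce both parts to the renormalization-group structure of (\ref{group}) and the fixed-point characterisation (\ref{fixedpoint}) of $H$-SS. For Item 2 I would simply take $X=Y$: since $Y$ is assumed $H$-SS, the equivalent form (\ref{fixedpoint}) of self-similarity gives
\[
c^{-H}Y(\mu c^{H-1};ct)\overset{d}{=}Y(\mu;t)\text{ for every }c>0,
\]
so (\ref{powerc}) holds trivially in both regimes $c\to\infty$ and $c\downarrow 0$. This disposes of the existence direction without any work.

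For Item 1 the central observation is that $R^H_{c'}$ composes naturally with the rescalings appearing in (\ref{powerc}). Direct substitution into the definition (\ref{group}) yields
\[
R^H_{c'}\!\left[c^{-H}X(\mu c^{H-1};ct)\right]=(cc')^{-H}X\!\left(\mu(cc')^{H-1};(cc')t\right),
\]
that is, the same expression with $c$ replaced by $cc'$. For any fixed $c'>0$, sending $c\to\infty$ (resp.\ $c\downarrow 0$) forces $cc'\to\infty$ (resp.\ $cc'\downarrow 0$), so I would apply the hypothesis (\ref{powerc}) twice: once to the rescaled sequence, yielding the limit $Y(\mu;t)$, and once via continuity of $R^H_{c'}$, yielding $R^H_{c'}Y(\mu;t)=c'^{-H}Y(\mu c'^{H-1};c't)$. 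Equating the two limits gives
\[
c'^{-H}Y(\mu c'^{H-1};c't)\overset{d}{=}Y(\mu;t)\text{ for }t\geq 0,
\]
for every $c'>0$ and every $\mu\in\overline{\Omega}$, which is exactly (\ref{fixedpoint}) and hence equivalent to $H$-SS.

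The only point requiring care is the continuous-mapping step: one must check that if $Z_n(\mu;\cdot)\overset{d}{\to}Z(\mu;\cdot)$ in the relevant sense, then $R^H_{c'}Z_n\overset{d}{\to}R^H_{c'}Z$. But $R^H_{c'}$ acts on each finite-dimensional marginal by a fixed affine rescaling of the values together with a fixed reparametrisation of $(\mu,t)$, so the conclusion follows from the standard continuous mapping theorem applied coordinate-wise to the finite-dimensional distributions. This is the main (and essentially only) technical obstacle; everything else is algebraic manipulation of the renormalization group.
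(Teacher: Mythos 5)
Your proof is correct and follows essentially the same route as the paper: Item 2 is the same trivial choice $X=Y$, and your renormalization-group computation $R^{H}_{c'}\circ R^{H}_{c}=R^{H}_{cc'}$ combined with the continuous mapping theorem and uniqueness of weak limits is exactly the paper's argument, where the substitution $t_j\mapsto bt_j$, $\mu\mapsto\mu b^{H-1}$, $x_j\mapsto x_j b^{H}$ in the finite-dimensional distribution functions is precisely the action of $R^{H}_{b}$. The only cosmetic difference is that the paper handles the limit-comparison at the level of distribution functions evaluated at common continuity points, whereas you invoke the continuous mapping theorem; both are standard and equivalent here.
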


\begin{proof}
We firs show Item 1. in the case $c\rightarrow \infty .$ The proof in the
case $c\downarrow 0$ is similar. By (\ref{powerc}), for any $t_{1},\ldots
,t_{k}>0$ and for continuity points $x_{1},\ldots ,x_{k}$ of $\left\{ Y(\mu
;t_{j}),1\leq j\leq k\right\} $ and $\left\{ Y(\mu b^{H-1};bt_{j}),1\leq
j\leq k\right\} $, where $b>0$, we obtain 
\begin{equation}
\lim_{c\rightarrow \infty }\Pr \left\{ c^{-H}X(\mu c^{H-1};ct_{j})\leq
x_{j},1\leq j\leq k\right\} =\Pr \left\{ Y(\mu ;t_{j})\leq x_{j},1\leq j\leq
k\right\} \text{.}  \label{aa}
\end{equation}%
Replacing $t_{j}$ by $bt_{j}$, $\mu $ by $\mu b^{H-1}$ and $x_{j}$ by $%
x_{j}b^{H}$ we obtain 
\begin{equation}
\lim_{c\rightarrow \infty }\Pr \left[ c^{-H}X\left\{ \mu \left( cb\right)
^{H-1};\left( cb\right) t_{j}\right\} \leq x_{j}b^{H},1\leq j\leq k\right]
=\Pr \left\{ Y(\mu b^{H-1};bt_{j})\leq x_{j}b^{H},1\leq j\leq k\right\} 
\text{.}  \label{bb}
\end{equation}%
Since the limits on the left-hand sides of (\ref{aa}) and (\ref{bb}) have
the same value, due to (\ref{powerc}), it follows by comparing the two
right-hand sides that $Y(\mu ;t)\overset{d}{=}b^{-H}Y(\mu b^{H-1};bt)$,
which is the self-similarity of $Y$.

Item 2. is trivially obtained by taking $X(\mu ;t)\equiv Y(\mu ;t)$. This
completes the proof.
\end{proof}

\section{Exponential variance functions ($H=1$)\label{twinfty}}

Much like in the case of strict self-similarity, the value $H=1$ for the
Hurst exponent corresponds to a degenerate family of processes, at least
when second moments are finite. In fact, if the family $X$ is SSSI in the
sense of Definition \ref{ss-def}, the scaling (\ref{H-ss}) with $H=1$
becomes 
\begin{equation}
X(\mu ;ct)\overset{d}{=}cX(\mu ;t)\text{ for }t\geq 0\text{.}  \label{H=1}
\end{equation}%
Under second-moment assumptions, this implies that $\mathrm{Cov}\left[ X(\mu
;s),X(\mu ;t)\right] =V(\mu )st$ (compare with (\ref{gamma-case})). For $t=1$
it follows, in turn, that 
\begin{eqnarray}
\mathrm{Var}\left[ X(\mu ;s)-sX(\mu ;1)\right] &=&\mathrm{Var}\left[ X(\mu
;s)\right] +s^{2}\mathrm{Var}\left[ X(\mu ;1)\right] -2s\mathrm{Cov}\left[
X(\mu ;s),X(\mu ;1)\right]  \notag \\
&=&V(\mu )s^{2}+V(\mu )s^{2}-2sV(\mu )s=0\text{. }  \label{deg}
\end{eqnarray}%
This implies that $X(\mu ;\cdot )$ is a degenerate, straight-line processes $%
X(\mu ;s)\equiv sX(\mu ;1)$ a.s. for each $\mu \in \Omega $.

Let us instead propose an extended definition of self-similarity for the
case $H=1$. We say that a family of stochastic processes $X$ is \emph{%
self-similar with Hurst exponent} $H=1$ ($1$-SSSI) if there exists a
function $f:\mathbb{R}_{+}\rightarrow \overline{\Omega }$ such that for all $%
c>0$ and $\mu \in \overline{\Omega }$. 
\begin{equation}
X(\mu +f(c);ct)\overset{d}{=}c\left[ X(\mu ;t)+tf(c)\right] \text{ for }t>0%
\text{.}  \label{pis1}
\end{equation}%
The interpretation of this definition is that a location change for the rate
and a simultaneous rescaling of time is equivalent to a location and scale
change for the process. In the special case $f(c)\equiv 0$ we obtain the
degenerate case (\ref{H=1}) already discussed.

\begin{proposition}
\label{1mean}Let $X$ be a $1$-SSSI family of stochastic processes satisfying
(\ref{pis1}) with a non-constant $f$. Assume that $X(\mu ;t)$ has finite
expectation for all $\mu \in \Omega $ and $t\geq 0$. If the function $%
t\mapsto \mathbb{E}\left[ X(\mu ;t)\right] $ is continuous on $[0,\infty )$,
then up to a translation of $\mu $ there exist constants $a$ and $b\neq 0$
such that%
\begin{equation}
\mathbb{E}\left[ X(\mu ;t)\right] =ae^{b\mu }+\mu t\text{ for }t\geq 0\text{
and }\mu \in \Omega \text{.}  \label{meanfunction}
\end{equation}%
When $a\neq 0$, this implies that 
\begin{equation}
f(c)=b^{-1}\log c\text{ for }c>0\text{.}  \label{fc}
\end{equation}
\end{proposition}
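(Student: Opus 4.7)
The plan is to parallel the strategy of Proposition~\ref{propmean}. First, by the stationary-increments hypothesis (\ref{additive}), the map $t \mapsto \mathbb{E}[X(\mu;t)]$ solves the Cauchy functional equation, and its continuity yields the linear representation
\begin{equation*}
\mathbb{E}[X(\mu;t)] = a(\mu) + b(\mu)\,t, \qquad t \geq 0,
\end{equation*}
for each $\mu \in \Omega$ and suitable functions $a,b:\Omega \to \mathbb{R}$.

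Next, I take expectations on both sides of the scaling identity (\ref{pis1}) and substitute this linear form. Matching the constant and the linear-in-$t$ coefficients produces the pair of functional equations
\begin{align*}
a(\mu + f(c)) &= c\,a(\mu),\\
b(\mu + f(c)) &= b(\mu) + f(c),
\end{align*}
valid for every $c>0$ and $\mu \in \Omega$. The second identity says that $h(\mu):=b(\mu)-\mu$ is invariant under translation by every element of the range of $f$; since $f$ is non-constant, a regularity argument shows $h$ is constant, and translating $\mu$ by this constant we may assume $b(\mu)=\mu$. This accounts for the clause ``up to a translation of $\mu$''.

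It remains to analyse $a(\mu+f(c))=c\,a(\mu)$. If $a\equiv 0$, the conclusion (\ref{meanfunction}) holds with constant $a=0$. Otherwise, iterating the relation in two different ways yields
\begin{equation*}
a\bigl(\mu + f(c_1)+f(c_2)\bigr)=c_1c_2\,a(\mu)=a\bigl(\mu+f(c_1c_2)\bigr),
\end{equation*}
so $\delta:=f(c_1)+f(c_2)-f(c_1c_2)$ is a period of $a$. On the other hand, $a(\mu_0+f(c))=c\,a(\mu_0)$ with $a(\mu_0)\neq 0$ forces $a$ to be unbounded as $c\to\infty$, which under sufficient regularity in $\mu$ is incompatible with a nonzero period. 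Hence $f$ satisfies the multiplicative Cauchy equation $f(c_1c_2)=f(c_1)+f(c_2)$, giving $f(c)=b^{-1}\log c$ for some $b\neq 0$; substituting back into $a(\mu+f(c))=c\,a(\mu)$ gives $a(\mu+y)=e^{by}a(\mu)$, whence $a(\mu)=\tilde a\,e^{b\mu}$.

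I expect the main obstacle to lie in this last chain: ruling out nonzero periods of $a$ and thereby establishing the multiplicative Cauchy equation for $f$ requires regularity (continuity or measurability) in $\mu$ beyond the $t$-continuity stated in the hypothesis, in direct analogy with the extra $\mu$-continuity assumption invoked in the second half of Proposition~\ref{propmean}.
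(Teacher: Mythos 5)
Your proposal is correct and follows essentially the same route as the paper: the linear form of the mean from stationary increments via the Cauchy functional equation, then taking expectations in (\ref{pis1}) and matching the constant and linear-in-$t$ coefficients to obtain exactly the two functional equations $a(\mu+f(c))=c\,a(\mu)$ and $b(\mu+f(c))=b(\mu)+f(c)$. The only difference is that you spell out how to solve the first equation (the iteration producing periods $f(c_1)+f(c_2)-f(c_1c_2)$ of $a$, ruled out by unboundedness plus continuity in $\mu$, hence the multiplicative Cauchy equation for $f$), whereas the paper simply asserts the exponential solution; your closing remark that this step needs $\mu$-regularity beyond the stated $t$-continuity is a fair observation about a hypothesis the paper leaves implicit.
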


\begin{proof}
By the proof of Proposition \ref{propmean}, the stationarity of the
increments for $X$ implies that 
\begin{equation*}
\mathbb{E}\left[ X(\mu ;t)\right] =a(\mu )+b(\mu )t\text{,}
\end{equation*}%
for suitable functions $a$ and $b$. In view of (\ref{pis1}), this implies
that the functions $a$, $b$ and $f$ satisfy%
\begin{equation*}
a(\mu +f(c))+b(\mu +f(c))ct=ca(\mu )+\left[ b(\mu )+f(c)\right] ct\text{ }
\end{equation*}%
for $c,t>0$. By comparing the left- and right-hand linear functions of $t$
we obtain the equations%
\begin{eqnarray}
a(\mu +f(c)) &=&ca(\mu )  \label{aanda} \\
b(\mu +f(c)) &=&b(\mu )+f(c)\text{.}  \label{bandb}
\end{eqnarray}%
Since $f$ is not constant, the solution to (\ref{aanda}) is $a(\mu
)=ae^{b\mu }$, say, where $a=a(0)$ and $b\neq 0$ are constants. When $a\neq
0 $, this implies $f(c)=b^{-1}\log c$. From (\ref{bandb}) we obtain $b(\mu
)=b(0)+\mu $, where we may take $b(0)=0$ up to a translation of $\mu $.
\end{proof}

When the family $X$ satisfies the conditions of Proposition \ref{1mean}, we
define the centered family $X_{0}$ by analogy with (\ref{famcen}), that is, 
\begin{equation*}
X_{0}(\mu ;t)=X(\mu ;t)-ae^{b\mu }\text{.}
\end{equation*}%
It is easy to check that the family $X_{0}$ is again self-similar, now with $%
a=0$. The mean function (\ref{meanfunction}) for $X_{0}$ then takes the form 
\begin{equation}
\mathbb{E}\left[ X_{0}(\mu ;t)\right] =\mu t\text{ for }t\geq 0\text{ and }%
\mu \in \Omega \text{.}  \label{form}
\end{equation}
Unless otherwise stated, we always assume that the $1$-SSSI family $X$ is
centered.

Let us now derive the covariance function of an $1$-SSSI family $X$,
assuming that the rate $\mu $ and variance function $V$ satisfy (\ref{form})
and (\ref{unitvf}), respectively. A simple rearrangement of (\ref{pis1})
with $c=t^{-1}$ yields 
\begin{equation*}
X(\mu ;t)\overset{d}{\sim }tX(\mu +f\left( t^{-1}\right) );1)-tf\left(
t^{-1}\right) \text{ for }t>0\text{,}
\end{equation*}%
which gives the following expression for the variance of the process, 
\begin{equation}
\mathrm{Var}\left[ X(\mu ;t)\right] =t^{2}V(\mu +f\left( t^{-1}\right)
)=V_{1}(\mu ;t)\text{,}  \label{var1fu}
\end{equation}%
say. The covariance function now looks similar to the general case, namely 
\begin{equation}
\mathrm{Cov}\left[ X(\mu ;s),X(\mu ;t)\right] =\frac{1}{2}\left[ V_{1}(\mu
;s)+V_{1}(\mu ;t)-V_{1}(\mu ;\left\vert t-s\right\vert )\right] \text{,}
\label{covexp}
\end{equation}%
for $s,t>0$.

Following 
%TCIMACRO{\TeXButton{Jorgensen1997}{\citet[Ch.~4]{Jorgensen1997}}}%
%BeginExpansion
\citet[Ch.~4]{Jorgensen1997}%
%EndExpansion
, we consider the family of infinitely divisible exponential dispersion
models $\mathrm{Tw}_{\infty }(\mu ,b,t)$, indexed by the parameter $b\in 
\mathbb{R}$ and with domain $\Omega =\mathbb{R}$ for the rate parameter $\mu 
$. For given $b$, the model $\mathrm{Tw}_{\infty }(\mu ,b,t)$ is defined by
the variance function $V(\mu )=\sigma ^{2}e^{b\mu }$ for $\mu \in \mathbb{R}$
and $\sigma ^{2}>0$. The case $b=0$ corresponds to the normal distribution
with constant variance function, but for $b\neq 0$, we may consider $\mathrm{%
Tw}_{\infty }(\mu ,b,t)$ to be a Tweedie model with power parameter $%
p=\infty $. For $b>0$ ($b<0$), the model \textrm{$Tw$}$_{\infty }(\infty
,b,t)$ (\textrm{$Tw$}$_{\infty }(-\infty ,b,t)$) is an extreme $1$-stable
distribution, and we hence obtain the extended domain $\overline{\Omega }%
_{b}=(-\infty ,\infty ]$ for $b>0$ and $\overline{\Omega }_{b}=[-\infty
,\infty )$ for $b<0$.

The model $\mathrm{Tw}_{\infty }(\mu ,b,t)$ satisfies two separate
transformation properties 
%TCIMACRO{\TeXButton{Jorgensen1997}{\cite[Ch.~4]{Jorgensen1997}}}%
%BeginExpansion
\cite[Ch.~4]{Jorgensen1997}%
%EndExpansion
, where the first is a translation property,%
\begin{equation}
c+\mathrm{Tw}_{\infty }(\mu ,b,t)=\mathrm{Tw}_{\infty }(c+\mu ,b,te^{bc})%
\text{ for }c\in \mathbb{R}\text{,}  \label{translate}
\end{equation}%
and the second is a scaling property, 
\begin{equation}
c\mathrm{Tw}_{\infty }(\mu ,b,t)=\mathrm{Tw}_{\infty }(c\mu ,bc^{-1},tc^{-2})%
\text{ for }c>0\text{.}  \label{scale}
\end{equation}%
The latter leads us to define $S_{\infty }(\mu ,b;t)$ as the L\'{e}vy
process with marginal distribution 
\begin{eqnarray}
S_{\infty }(\mu ,b;t) &\overset{d}{\sim }&t\mathrm{Tw}_{\infty }(\mu ,b,t) 
\notag \\
&=&\mathrm{Tw}_{\infty }(\mu t,bt^{-1},t^{-1})\text{ for }t>0\text{.}
\label{marginal}
\end{eqnarray}%
Depending on the sign of $b$, we find that $S_{\infty }(\infty ,b;t)$ ($b>0$%
), respectively $S_{\infty }(-\infty ,b;t)$ ($b<0$), are extreme $1$-stable L%
\'{e}vy processes. The exponential family of stochastic processes $S_{\infty
}(\mu ,b;t)$ for $\mu \in \overline{\Omega }_{b}$ may hence be generated
from the respective processes $S_{\infty }(\pm \infty ,b;t)$ by exponential
tilting. As is the case for ordinary Hougaard L\'{e}vy processes, the
families $S_{\infty }(\mu ,b;t)$ may be characterized by self-similarity, as
we shall now see.

\begin{theorem}
\label{extremeHougaard} Let $X$ be a non-degenerate exponential family of L%
\'{e}vy processes. Then $X$ is self-similar with Hurst exponent $H=1$ in the
sense (\ref{pis1}) if and only if $X$ is a Hougaard family of L\'{e}vy
processes $S_{\infty }(\mu ,b;t)$ with $b\neq 0$.
\end{theorem}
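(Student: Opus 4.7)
The plan is to mirror the structure of Theorem \ref{Hougaard}, replacing the Tweedie scaling (\ref{Tw}) with the two transformation identities available in the $p=\infty$ case, namely the translation property (\ref{translate}) and the scaling property (\ref{scale}). The ``if'' direction will verify (\ref{pis1}) directly by a marginal-distribution calculation, while the ``only if'' direction will combine the Lévy variance formula (\ref{Houvar}) with the $1$-SSSI variance formula (\ref{var1fu}) to force the variance function to be exponential, and then invoke the characterization of exponential dispersion models by their variance function.

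For the ``if'' direction, both sides of (\ref{pis1}) are Lévy processes in $t$: on the left, a time-rescaling of a Lévy process; on the right, a linear drift added to a scaled Lévy process. Hence it is enough to check equality of the one-dimensional marginals at time $t$. Take $f(c)=b^{-1}\log c$. Using (\ref{marginal}), the left-hand side has marginal distribution
\begin{equation*}
\mathrm{Tw}_{\infty}\bigl(ct(\mu+f(c)),\,b/(ct),\,1/(ct)\bigr).
\end{equation*}
On the right-hand side, I will first apply (\ref{translate}) with shift $tf(c)$ to $\mathrm{Tw}_{\infty}(\mu t, b/t, 1/t)$, and then apply (\ref{scale}) with factor $c$. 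The rate and $b$-parameters match automatically, while the third (time-like) parameter matches precisely when $c=e^{bf(c)}$, i.e.\ when $f(c)=b^{-1}\log c$. This is exactly the form of $f$ predicted by Proposition \ref{1mean}, and it makes the choice of $f$ natural rather than ad hoc.

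For the ``only if'' direction, let $X$ be a non-degenerate exponential family of Lévy processes satisfying (\ref{pis1}). Since $X(\mu;0)\equiv 0$, Proposition \ref{1mean} forces $a=0$, so $X$ is already in the centered form (\ref{form}). Combining $\mathrm{Var}[X(\mu;t)]=tV(\mu)$ from (\ref{Houvar}) with (\ref{var1fu}) yields, after setting $c=t^{-1}$,
\begin{equation*}
V(\mu+f(c))=cV(\mu)\quad\text{for all }c>0,\ \mu\in\Omega.
\end{equation*}
Non-degeneracy ($V\not\equiv 0$) rules out a constant $f$, since a constant $f$ would force the left-hand side to be independent of $c$ while the right-hand side scales linearly in $c$. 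Taking $L=\log V$ and $M(\mu)=L(\mu)-L(0)$, the equation reduces to $M(\mu+f(c))=M(\mu)+M(f(c))$, a Cauchy functional equation on the range of $f$. Exponential families have analytic variance functions, so continuity gives $M(\mu)=b\mu$, hence $V(\mu)=\sigma^{2}e^{b\mu}$, with $b\neq 0$ (else $f$ would be constant). Since the variance function characterizes an exponential dispersion model \citep[Ch.~3]{Jorgensen1997}, $X$ must coincide with the Hougaard family $S_{\infty}(\mu,b;\cdot)$, and the forced equality $f(c)=b^{-1}\log c$ is consistent with the ``if'' direction. The main technical obstacle is justifying that the Cauchy equation may be solved globally on $\Omega$; this will follow from the analyticity of $V$ and the fact that $f$, being determined (on monotone pieces of $V$) by $f(c)=V^{-1}(cV(0))-\mu_{0}$ for a fixed $\mu_{0}$, is continuous and has a range containing an interval.
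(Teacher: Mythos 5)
Your proposal is correct and follows essentially the same route as the paper: the ``if'' direction is the identical marginal computation with (\ref{marginal}), (\ref{translate}) and (\ref{scale}) pinning down $f(c)=b^{-1}\log c$, and the ``only if'' direction derives the same functional equation $V(\mu+f(s))=sV(\mu)$ from (\ref{Houvar}) and (\ref{var1fu}) and solves it to get an exponential variance function, then invokes the characterization of exponential dispersion models by their variance functions. Your extra care in reducing to a Cauchy equation via $M=\log V-\log V(0)$ only spells out the step the paper delegates to ``the same type of arguments as used in connection with (\ref{aanda}).''
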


\begin{proof}
Let us first show that the family of process $S_{\infty }(\mu ,b;t)$ for $%
\mu \in \overline{\Omega }_{b}$ is $1$-SSSI and satisfies the relation%
\begin{equation}
S_{\infty }(\mu +b^{-1}\log c,b;ct)\overset{d}{=}c\left\{ S_{\infty }(\mu
,b;t)+tb^{-1}\log c\right\} \text{ for }t>0\text{,}  \label{infinity}
\end{equation}%
corresponding to (\ref{pis1}) and (\ref{fc}) (with $f(c)=b^{-1}\log c$).
Since we have a L\'{e}vy process, it is enough to consider the marginal
distribution of $S_{\infty }(\mu ,b;t)$. Using (\ref{marginal}) and (\ref%
{translate}), we find that the left-hand side of (\ref{infinity}) has
marginal distribution%
\begin{equation*}
S_{\infty }(\mu +b^{-1}\log c,b;ct)\overset{d}{\sim }\mathrm{Tw}_{\infty
}(ct(\mu +b^{-1}\log c),b\left( ct\right) ^{-1},\left( ct\right) ^{-1})\text{%
.}
\end{equation*}%
The right-hand side of (\ref{infinity}) follows from (\ref{marginal}), (\ref%
{translate}) and (\ref{scale}), 
\begin{eqnarray*}
c\left\{ S_{\infty }(\mu ,b;t)+tb^{-1}\log c\right\} &\overset{d}{\sim }&c%
\mathrm{Tw}_{\infty }(\mu t+tb^{-1}\log c,bt^{-1},t^{-1}c) \\
&=&\mathrm{Tw}_{\infty }(ct\left( \mu +b^{-1}\log c\right) ,b\left(
ct\right) ^{-1},\left( ct\right) ^{-1})
\end{eqnarray*}%
This implies the $1$-SS property (\ref{infinity}), as desired.

To show that self-similarity with $H=1$ characterizes the Hougaard L\'{e}vy
process, let us consider an exponential family of L\'{e}vy processes $X$
with variance function $V$, such that (\ref{pis1}) is satisfied. We know
that $V$ is positive and analytic on $\Omega $, due to the exponential
family assumption, so $f$ is not identically zero. Since we are dealing with
a L\'{e}vy process, the variance of the process is $\mathrm{Var}X(\mu
;t)=tV(\mu )$. Comparing this with (\ref{var1fu}) we obtain the equation $%
tV(\mu )=t^{2}V(\mu +f(t^{-1}))$, or equivalently, with $s=t^{-1}$,%
\begin{equation}
V(\mu +f(s))=sV(\mu )\text{.}  \label{funceq}
\end{equation}%
If $f(c)\equiv 0$, we obtain the degenerate process (\ref{H=1}), which is
ruled out by assumption. Hence, by the same type of arguments as used in
connection with (\ref{aanda}), the solution to (\ref{funceq}) is 
\begin{equation}
V(\mu )=V(0)e^{b\mu }\text{ for }\mu \in \mathbb{R}  \label{Varbeta}
\end{equation}%
for some $b\neq 0$, and consequently $f(s)=b^{-1}\log s$. Taking $\sigma
^{2}=V(0),$ the variance function (\ref{Varbeta}) characterizes the family $%
\mathrm{Tw}_{\infty }(\mu ,b,t)$, and hence the family of Hougaard L\'{e}vy
processes $S_{\infty }(\mu ,b;t)$, completing the proof.
\end{proof}

\section*{Acknowledgements}

We are grateful to Florin Avram, Martin Jacobsen, Wayne Kendal, Gennady
Samorodnitsky, Steen Thorbj\o rnsen and Vladimir Vinogradov for helpful
comments on the paper. This research was supported by the Danish Natural
Science Research Council and FAPESP.

\bibliographystyle{abbrvnat}
\bibliography{bentj}

%TCIMACRO{%
%\TeXButton{Stop \tableofcontents }{\newpage
%
%\end{document}}}%
%BeginExpansion
\newpage

\end{document}%
%EndExpansion

\end{document}